\newtheorem{observation}{Observation}
\begin{document}
%%%%%%%%%%%%%%%%

% Outcomment only when entries are known. Otherwise leave as is and
%   default values will be used.
%\setcounter{page}{1}
%\VOLUME{00}%
%\NO{0}%
%\MONTH{Xxxxx}% (month or a similar seasonal id)
%\YEAR{0000}% e.g., 2005
%\FIRSTPAGE{000}%
%\LASTPAGE{000}%
%\SHORTYEAR{00}% shortened year (two-digit)
%\ISSUE{0000} %
%\LONGFIRSTPAGE{0001} %
%\DOI{10.1287/xxxx.0000.0000}%

% Author's names for the running heads
% Sample depending on the number of authors;
% \RUNAUTHOR{Jones}
% \RUNAUTHOR{Jones and Wilson}
% \RUNAUTHOR{Jones, Miller, and Wilson}
% \RUNAUTHOR{Jones et al.} % for four or more authors
% Enter authors following the given pattern:
\RUNAUTHOR{Hassin and Wang}

% Enter the (shortened) title:
\RUNTITLE{Self Selection of Service Type}

% Full title. Sample:
\TITLE{Optimal and Self Selection of Service Type in a Queueing System where Long Service Postpones the Need for the Next Service}

% Block of authors and their affiliations starts here:
% NOTE: Authors with same affiliation, if the order of authors allows,
%   should be entered in ONE field, separated by a comma.
%   \EMAIL field can be repeated if more than one author
\ARTICLEAUTHORS{%
\AUTHOR{Refael Hassin}
\AFF{School of Mathematical Sciences, Tel Aviv University, \EMAIL{hassin@tauex.tau.ac.il}} %, \URL{}}
\AUTHOR{Jiesen Wang}
\AFF{Korteweg-de Vries Institute for Mathematics, University of Amsterdam \EMAIL{jiesenwang@gmail.com}}
% Enter all authors
} % end of the block

\ABSTRACT{%
We study a make-to-order system with a finite set of customers.  Production is stochastic with a nonlinear dependence between the ordered quantity and the production rate.  Customers may have to queue until their turn arrives, and therefore their order decisions interact.  Specifically, while being served, customers are aware of the queue length and choose one of two order quantities (or service types). The time to the next replenishment (their activity time) is stochastic and depends on the order quantities. A customer is inactive during service and while waiting in the queue. We refer to the type of service with a greater ratio of expected activity to service time as ``more efficient''. In the centralized case, the system is interested in maximizing the steady-state average number of active customers, which is referred to as the efficiency of the system.  We show that choosing the more efficient service is not always optimal, but the optimal strategy can be approximated well by selecting one of three threshold strategies which depend on the number of inactive customers.  In the decentralized case, each customer acts to maximize the fraction of time she is active. We observe that individuals and the manager have opposite incentives: When the queue is long, individuals tend to choose the long service, while the manager prefers the short service in this case.  This makes the system difficult to regulate. However, we show that simply removing the less efficient service significantly increases efficiency.
% Enter your abstract
}%

\KEYWORDS{Closed queueing system, Strategic behavior, Service type selection.}
%Use this for final submission
\HISTORY{This paper was first submitted on April 20, 2023, has been with the authors for 18 months for 2 revisions, and was accepted on July 19, 2025. \\
Handling Editor: Michael Pinedo\\
Corresponding author: Jiesen Wang}

\maketitle
%%%%%%%%%%%%%%%%%%%%%%%%%%%%%%%%%%%%%%%%%%%%%%%%%%%%%%%%%%%%%%%%%%%%%%

% Samples of sectioning (and labeling) in POMS
% NOTE: (1) \section and \subsection do NOT end with a period
%       (2) \subsubsection and lower need end punctuation
%       (3) capitalization is as shown (title style).
%
%\section{Introduction.}\label{intro} %%1.
%\subsection{Duality and the Classical EOQ Problem.}\label{class-EOQ} %% 1.1.
%\subsection{Outline.}\label{outline1} %% 1.2.
%\subsubsection{Cyclic Schedules for the General Deterministic SMDP.}
%  \label{cyclic-schedules} %% 1.2.1
%\section{Problem Description.}\label{problemdescription} %% 2.

% Text of your paper here

\section{Introduction}

In a common situation, periodic decisions must be made that affect both the immediate reward (or cost) and the time of the next decision.  We mention just a few cases, but clearly, there are innumerable similar situations. In a typical make-to-order inventory planning situation, a firm or an individual decides on the order quantity of some good, and the higher the ordered quantity, the longer the time until there will be a need for the next order.
In a different situation, the more one invests in preventive maintenance, the longer the time until the next maintenance.
Maintenance can be classified according to the degree to which the system is restored. Perfect maintenance restores the system to be as good as new. Minimal maintenance restores the system to the state whose failure rate is the same as it was when it failed.  See  \citet{PW96} and \citet{DS20} for comprehensive reviews of this line of research. 
%A similar example occurs in a healthcare context; is it better to visit the doctor when any question arises or take a thorough checkup?

Common to these cases is the choice between frequent cheap expenditures and less frequent but more expensive expenditures.  The utility per decision cycle is $f(T) - K$,
where $K >0$ is a fixed cost, $T$ is the cycle length, and $f(T)$ is concave increasing variable reward. The decision problem is
\begin{equation} \label{eq:problem}
    \max_{T > 0} \, \frac{f(T)}{T} - \frac{K}{T} \,.
\end{equation}

We study a decision model of the above kind, but add to it interactions among the decision makers which emerge from the need to share a common queue when waiting for their order to be fulfilled. These interactions associate external effects with individual decisions, and in particular, their individual selfish strategies may differ from the overall (social/system) efficient strategy. Moreover, unlike the problem in \eqref{eq:problem}, decisions may be state-dependent.

In an illustrative example, electric vehicle (EV) drivers need to charge the car for some amount of time before driving, and then return to the charging station to recharge when it is running out of power. Since charging time is not negligible, queues at charging stations are inevitable. According to China Daily news\footnote{https://global.chinadaily.com.cn/a/202110/11/WS61639f31a310cdd39bc6e0b7.html}, an EV driver had to spend four hours at the rest area in Hunan province, waiting for her turn to charge her vehicle for the one hour it needed. The battery recharge curve\footnote{https://www.vintagetrailersupply.com/progressive-dynamics-pd9245-45-amp-converter-vts-320/} shows that the time to reach full charge is almost three times, and can be five times the time to reach 90\% of the maximum charge, see Figure 10 of \citet{KSK20}. So, the relationship between the gained power and the charging time is in general concave. 
%\footnote{See also the miles gained vs$.$ time graph\footnote{https://insideevs.com/features/342689/battery-electric-fast-charging-versus-time-explained/}, and Figure 10 of \citet{KSK20}.}
%Charging problems are not only faced by EV drivers but also by  mobile robots.

In this paper, we analyze a stylized model with two types of service and a finite number of customers. (The low- and high-rate service options in our model may correspond to perfect and minimal options in the maintenance application.) While queueing or obtaining service, the customers of the system are inactive, but once their service terminates,, they become active until they need the next service. Fast service generates shorter active time. For example, the fast and slow services may correspond to 80\% and 100\% charging percent in the battery recharge curve. We are interested in the state-dependent choice of service type that maximizes the {\it efficiency of the system} defined as the average fraction of active time per unit time, times the number of customers, or, equivalently, the average number of active customers per unit time.  We also analyze the equilibrium selection strategy when customers act to maximize their individual welfare. We observe the `follow the crowd' phenomenon, which generally leads to multiple Nash equilibria.  In our model, the inactivity time during service and queueing substitutes the fixed cost $K$ in (\ref{eq:problem}).  Therefore, this ``ordering cost’’ is now {\it endogenous} and affected by the service type selection strategy.

Let the {\it efficiency of a service type} be the ratio of expected activity time to expected service duration per service. A service-type selection strategy is (socially) optimal if it maximizes system efficiency. In the case of a single customer, always choosing the more efficient service type is optimal, but this is not the case with multiple customers, whose choices depend on the state of the system.
% and both optimal individual and social choices depend on the state of the system.  
We support the conjecture that if we constrain the strategy to be a threshold strategy depending on the number of inactive customers, but not the type of service the active customers receive, then the resulting best strategy is near optimal. Therefore, we focus on threshold strategies that depend on the number of inactive customers only.  

We note an interesting phenomenon - although long service is the (socially) preferred choice when the queue is short, individuals may prefer long service when the queue is long, if all the other individuals choose long service when the queue length exceeds a threshold. If this is the case, traditional ways to induce customers to behave in the system's optimal way cannot be used. However, we numerically show that simply removing the less efficient service type is an effective regulation.  %We  calculate the Price of Anarchy in this case, and find that it can be arbitrarily large. However, we can decrease it effectively by simply removing the less efficient service type from the system. 

\subsection{Literature review}

{Our model is inspired by situations in which customers join a queue for replenishing their inventory of some good.  They decide on their order size - a large order will lengthen both the production time and the time until the next replenishment (the activity time).  However, the relation between production time and activity time may be complex.  In some cases, production time is a convex function of the order size, and in some cases the activity time is a concave function of the order size.  An important case is that of perishable products with a random lifetime.

There is an extensive literature on queueing-inventory systems, see for example, the survey of \cite{KS16}.  While these systems also combine queues and replenishment, their motivation is very different, and it is assumed that
the server maintains an inventory of some raw material required to perform the service.  This is in contrast with our model, where the inventory belongs to the customers.  Our model, when viewed as an inventory model, belongs to the growing literature of inventory systems with strategic customers \citep[see][]{GH86,LS14,HLS19}. \citet{LS14} consider the inventory control for a system in which the production cost consists of a fixed cost and a piecewise linear convex cost. \citet{HLS19} study the joint pricing and inventory control problem for a single product, where the production cost includes a fixed cost and a convex or concave variable cost. Both of the above models are about the problem in \eqref{eq:problem}. %, and  \citet{HLS19} give examples of the convex and concave costs.  % \citet{GH86} study an inventory model where both firms and customers are decision makers. Our model fits both firms that decide on the cycle time of producing or ordering inventory and individual consumers who purchase items for their individual use like the case of the ATM.

% \citet{YDR11} say that to encourage the consumer's adoption of EVs, most recently announced models have shorter driving ranges and a system with fast charging stations is necessary. Fast charging service is required out of convenience or in the case of emergency when the battery needs to be filled up in a short time. It enables vehicles to operate for a shorter driving distance, compared with the slow charging. In each recharging, the cost consists of both the waiting time drivers have to endure in the queue, and the charging time.

Game theoretic models of EV charging are described by \citet{DMP18} and \citet{DHJBW21}. \cite{SEK14} analyze various charging strategies for electric transport vehicles, where the economic assessment is based on a field experiment in a closed system. \cite{RRKJMH15} study the allocation of multiple mobile robots to docking stations, based on the robot's task priority, location awareness, and the docking station. \cite{GB19} consider the problem of assigning a fixed number of mobile robots to charging stations, to minimize the total charging time. Similar problems of a fixed number of mobile robots are also analyzed by \cite{CV09}, \citet{ZXK18}, and \citet{CFWX21}.
{\citet{WLLZMG22} consider a closed network of rechargeable sensors and a mobile charger that selects optimal charging positions and power levels from a discrete set of possibilities.  

Some open network models assume that customers enjoy longer service, as in our case.  See, for example,  \cite{H98}, \cite{H01}, \cite{YHI07}, \cite{TR14}, \citet{J22}, \citet{FS22}, and Section 6.3.2 in \citet{H16} on expert systems.  Similar to our model, \cite{CV83} consider a closed system with two players sharing an FCFS server and maximizing
their activity time (corresponding to the time they spend in service). The work in \cite{CV83} is motivated by programs that require access to a single shared disk (resource) between computations. As in our case, the active and inactive times are exponentially distributed, but in contrast to our assumption,  they are decided by the customers subject to an exogenous constraint on their ratio. 
\citet{CZ16} consider holiday shopping events and mention that long queues induce customers to purchase larger quantities. In our case, under the equilibrium that results from selfish optimization, long queues induce the choice of slow service.
%For example, \citet{H98} considers a pricing problem for a GI/GI/1 queue with linear delay costs, where customers can choose their service rate. \citet{H01} considers a multiclass queue where customers of each class choose a mean service requirement. Physicians can choose either basic treatment or extended treatment based on the number of awaiting patients \citep{YHI07}. \citet{TR14} analyze a time-based pricing scheme that allows consumers to choose how long to stay in service. 

A similar model to ours is first studied in \citet{BH19}, who consider a system of two customers and assume that the state of the system is {\it unobservable} to the customers. The authors derive the conditions for the existence of equilibria. They also show that a pure asymmetric equilibrium does not exist, a pure asymmetric strategy cannot be optimal, and a pure symmetric equilibrium always exists.

Another similar decision problem concerns advertising effectiveness and optimal frequency \citep[see][]{B00,GH10}. Shall we conduct expensive advertising campaigns at a low frequency or cheap ones at a higher frequency? In these models, firms are decision-makers.

\subsection{Our contribution}

%Each customer stays active for an exponential time after her service, and then comes to the server to get recharged. If the server is occupied, inactive customers need to queue to get served. The service discipline is first-come-first-served. The service time is exponentially distributed, and there are two types of service: {\it high} and {\it low}. If a customer chooses the high rate, then her expected service time will be shorter, but her expected active time will also be shorter. Let $\mu_h$ and $\lambda_h$, $\mu_l$ and $\lambda_l$ denote the service rate and the active rate for the high service and the low service, respectively. Then $\mu_h > \mu_l$ and $\lambda_h > \lambda_l$.

We consider a closed Markovian queueing system with $N$ customers, and study the strategy that maximizes system efficiency when fast and slow services are available. A major challenge of this research is that customer decisions interact, and the flow inside the network, which is affected by the service type adopted by the customers, is not Poisson.
%We define \blue{{\it the efficiency of a service type}} as the ratio of expected active time it provides to expected service time.  We define \blue{{\it system efficiency}} as the average number of active customers.  
%The ratio $\mu_h/\lambda_h$ and $\mu_l/\lambda_l$ represent the efficiency of the high and low service, respectively. It is observed that the marginal value of charging decreases with charging time in some literature, to cite but a few, see \citet{N00} and \citet{R19}. In this case, $\mu_h/\lambda_h > \mu_l/\lambda_l$. The higher the efficiency, the longer the active time per unit service time. %In the rest of the paper, ``optimal" means ``socially optimal".
Intuitively, it is optimal to use the more efficient service type, but it is not that simple because the waiting time in the queue can make the more efficient service type less desirable.
%besides the efficiency of the choice, we also need to consider the effect of the queue. If the fast service is more efficient, but the system is very crowded, it is possible that the customers choose the low service to avoid the queueing time.
We can view the server as producing active time. With the more efficient service type, more activity time is produced per unit service.
However, the server is not constantly busy and to achieve maximum efficiency one must also take into consideration the probability that the server is busy, which may be greater under less efficient service.

Interestingly, when $N = 1$ and $N = \infty$, it is optimal to always use the more efficient service, but when $N$ takes intermediate values, this is not the case.

%When $N = 1$, the manager wants customers to stick to the more efficient service. In the advertising example above, it is optimal to choose the most efficient choice. When $N = \infty$, then the server is almost always busy.
%In this case, we consider the efficiency from the server's perspective.
%The manager expects the server to stick to the more efficient service. But is this the case for other $N$ values? Our analysis says no.

 When $N = 2$, we obtain the system efficiency expressions for all pure strategies and prove that the optimal strategy is one of the three: always choose the low rate, always choose the high rate, or choose the high rate when the other customer is queueing and choose the low rate otherwise. In particular, the third strategy is optimal when the two types of services are equally efficient. This strategy means that the manager wants to dispel the queue quicker when there is a queue.

When $N = 3, 4$, we apply brute-force calculation to numerically obtain the optimal strategy. Specifically, for a fixed parameter setting, %of $\lambda_h, \mu_h, \lambda_l, \mu_l$,
we compute the efficiency for all the (64 for $N = 3$ and 1024 for $N=4$) strategies, and then note down the strategies that result in maximum efficiency. Interestingly, for some parameter settings, the optimal strategy can choose the low rate when the queue is long but the high rate when the queue is shorter, which is different from the case where $N = 2$. This is because in this closed network, we need to consider not only the immediate reward brought by the decision, but also the expected reward of the next state in which this decision results. We will explain this in detail, using a dynamic programming approach, in Section \ref{sec:DP}. 

% When $N = 2,3,4$, choosing the more efficient service at every state is not always optimal. This can be explained if we look at the efficiency from the server's perspective. When $N$ is not large enough, there is a significant probability that the server is idle. If this probability is very high when more efficient strategy is adopted, then it is not optimal.

We observe that the choice of service type under the optimal strategy depends not only on the number of inactive customers but also on the number of active customers with different rates. However, if we constrain the strategy to only be based on the number of inactive customers, the resulting optimal system efficiency does not differ much from the optimal system efficiency. In other words, the information on the types of active customers is not significant. Thus, for $N > 4$, we focus on threshold strategies that depend only on the number of inactive customers and develop an efficient way to calculate the system efficiency for a given threshold strategy. We also find that when $N \rightarrow \infty$, the optimal strategy is to implement the more efficient service mode in any state.

Finally, we show that the best among the three strategies (a) always choose long service, (b) always choose short service, and (c) choose long service if there is only one inactive customer, gives a near-optimal solution. This is numerically demonstrated for the case where $N = 10$.}

Another aspect we consider in our paper is the decentralized case. Suppose that each customer optimizes her expected active time per unit time and is aware that other customers do the same. We assume bounded rationality here \citep[see, for example,][Chapter 11]{H16}, that is, queueing customers cannot observe or remember the service types chosen by customers in front of them, and they follow simple decision rules. Specifically, we assume that the customers' decision is based only on the number of inactive customers. The current queue length when a customer leaves is correlated with the queue length when she returns. Intuitively, the longer the queue when the customer is served, the more reluctant she is to come back soon to the queue. Thus, we assume that customers choose the low rate if and only if the queue length is above a certain threshold.} This means that the type of threshold strategy here is ``opposite to '' that of the centralized optimal case. We find that the best-response threshold of the customer in service is higher when the threshold chosen by other customers is higher. This phenomenon is termed as ``follow the crowd" in \citet{HH97} and \citet{HH03}, and multiple equilibria are possible in this case.

As we just noted, individuals and the manager may have opposite incentive directions.  This means that there are no simple ways, such as charging type-dependent service fees, to induce a desired social equilibrium. 
%We can follow common practice and charge the Clarke price for each customer, which is the cost they impose on others (both past and future arrivals)??? It is unclear how this can be done!!! Besides pricing, it is also possible to control the information available to the customers. The information can refer to the queue length or the number of customers active at a high/low rate. However, 
{We examine the effect of providing only the more efficient service type, and demonstrate that it induces satisfactory results.}  We also calculate the ``price of anarchy'' (see \cite{GH21}, for a survey on the price of anarchy in stochastic queueing systems) and show that it can be arbitrarily large. However, we show that by simply removing the less efficient service, the system efficiency improves a lot and the resulting individual behavior is close to optimal.

The remainder of this paper is organized as follows. Section \ref{sec:model} briefly describes the model and its relevant parameters. In Section \ref{sec:optimization}, we perform an optimization analysis of efficiency by deriving the analytical solution for $N = 2$ and numerically calculating the efficiency for every possible pure strategy for $N = 3,4$. We propose a method to calculate the efficiency of the system for any threshold strategy for a general value of $N$. In Section \ref{sec:NE}, we study efficiency optimization at an individual level and the Nash equilibrium threshold. Our numerical analysis shows that there always exists at least one Nash equilibrium of this type. In Section \ref{sec:FR}, we discuss possible extensions.

\section{The model} \label{sec:model}

We consider a closed queueing network with a single server, $N$ customers, and two types of service. When customer service ends, the customer stays active for an exponentially distributed time and then returns for the next service. If the server is occupied, inactive customers queue for service. The service discipline is first-come first-served. The service time is exponentially distributed and there are two types of service: {high} and {low}.

If a customer chooses the high rate, then her expected service time will be shorter, but her expected active time will also be shorter. Let $\mu_h$ and $\lambda_h$, $\mu_l$ and $\lambda_l$ denote the service and activity rates of the fast and slow service, respectively. Then $\mu_h > \mu_l$ and $\lambda_h > \lambda_l$.

We first aim to maximize system efficiency, which is the average number of active customers. To calculate efficiency, we construct a continuous-time Markov process in the state space,
\[
\mathcal{S} \equiv \{(i,h), i = 0, 1, \ldots, N, \, h = 0,1,\ldots, N-i\} \,,
\]
where $i$ and $h$ denote the number of inactive customers and the number of customers active with high rate, respectively. Note that the other $N-i-h$ customers are active at a low rate.

\section{System optimization} \label{sec:optimization}

A strategy prescribes for customers in service (when the server is busy) a service type for each state in $\mathcal{S}$ with $i\ge1$, note that $(0,h)$ does not require action. Thus, a strategy is a function $a(i,h)\in[0,1]$, which means that when the state is $(i,h)$, the customer chooses a high-rate service with probability $a(i,h)$. Under strategy $a$, a customer can change her service type when the state changes. This does not cause extra difficulty because of the memoryless property of the exponential distribution. %We consider symmetric strategies. 
We denote a specific strategy by aligning $a(i,h), ~i > 0,~ 0 \leq h \leq N-i$ in lexicographic order:
\[
 a(1,0)a(1,1)\ldots a(1,N-1)|a(2,0)a(2,1)\ldots a(2,N-2)|\ldots |a(N,0) \,.
\]

% We assume the system output is the steady-state probability to be active, or equivalently, the average number of active customers. 
We define the system output as the average number of active customers, or equivalently, the steady-state probability to be active times $N$.
When the state is $(i,h), i < N$, there are $N-i$ customers active. Thus, the efficiency of the system under strategy $a$ is
\begin{equation} \label{eq:SW}
    S(a) \equiv \sum_{i = 0}^{N-1}\sum_{h=0}^{N-i} \,  (N-i) \, \pi^{(a)}_{i,h} \,,
\end{equation}
where $\pi^{(a)}_{i,h}, \, (i,h) \in \mathcal{S}$ is the steady-state probability of state $(i,h)$ under strategy $a$.

We can also consider the system efficiency from the server's perspective. The server contributes to efficiency only when it is busy, so we only consider states $(i,h)$ with $i > 0$. When a server is working at a high rate, then after being busy for a time that is exponentially distributed with the rate $\mu_h$, there will be a customer being active for a time that is exponentially distributed with rate $\lambda_h$. That is, on average, each unit of service time contributes $(1/\lambda_h)/(1/\mu_h)$ active time. Similarly, when a server is working at a low rate, on average, each unit of service time contributes $(1/\lambda_l)/(1/\mu_l)$ active time. Thus, the system efficiency can also be written as
\begin{equation} \label{eq:SW2}
    S(a) =  \sum_{i = 1}^{N} \sum_{h=0}^{N-i} \, \left(a(i,h) \frac{\mu_h}{\lambda_h} + (1-a(i,h)) \frac{\mu_l}{\lambda_l}\right) \, \pi^{(a)}_{i,h}  \,.
\end{equation}

When $N = 1$, there is only one customer in the system.  If this customer uses the high rate or low rate service, the efficiency of the system for that type will be $(1/\lambda
_h)/(1/\lambda_h+1/\mu_h)$ or $(1/\lambda
_l)/(1/\lambda_l+1/\mu_l)$, respectively. Thus, the manager wants to stick to the more efficient service.

In this section, we first analyze the (socially) optimal strategy for $N = 2, 3, 4$, then move to the case of a general $N$. When $N = 2,3,4$, we consider strategies with complete information, that is, when making decisions, the system manager is aware of the number of active customers at a high rate, at a low rate, and inactive customers. In the full information case, the optimal strategy is w.l.o.g. {\it pure}. Later in Section \ref{sec:NE}, {\it mixed} strategies may be necessary when we deal with individual optimization and equilibrium.

Let $\mathcal{T}$ be the set of threshold policies depending on the number of inactive customers. When $N = 2$, the optimal strategy is of threshold type. When $N = 3,4$, we experimented with a wide range of parameter values and observed that although the optimal strategy may not be of threshold type, the relative loss by using a threshold strategy is quite insignificant. That means that, although the optimal strategy may not be of threshold type, the optimal system efficiency does not differ significantly from the best efficiency if we constrain the strategies to the threshold type. Thus, when $N > 4$ we ignore the value of $h$, and focus only on (threshold) strategies that are a function of $i$, the number of inactive customers.

\subsection{The case $N = 2$}

% \begin{figure}[h]
% 	\subcaptionbox{The transition rate diagram for all the states, under strategy $01|1$. The dashed curve separates the transient states from the recurrent states.}%
% 	{\includegraphics[width=0.5\linewidth]{./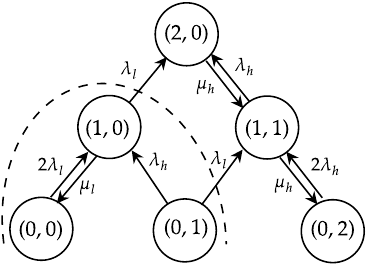}}
% 	\subcaptionbox{The transition rate diagram for recurrent states, under strategies $01|1$ and $11|1$.}
% 	{\includegraphics[width=0.42\linewidth]{./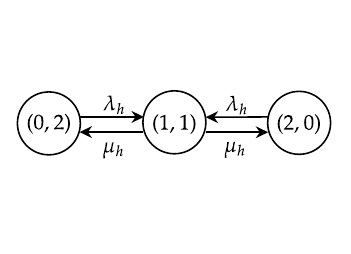}}
% 	\caption{} \label{fig:TRD2}
% \end{figure}

\begin{figure}[h]
    \centering
    \begin{minipage}{0.45\textwidth}
        \centering
        \includegraphics[width=\linewidth]{./Plots/N2.pdf}
        \textbf{(a)} 
        The transition rate diagram for all the states, under strategy $01|1$. The dashed curve separates the transient states from the recurrent states.
    \end{minipage}%
    \begin{minipage}{0.42\textwidth}
        \centering
        \includegraphics[width=\linewidth]{./Plots/N22.pdf}
        \textbf{(b)}
        The transition rate diagram for recurrent states, under strategies $01|1$ and $11|1$.
    \end{minipage}
    \vspace*{10pt}
    \caption{Transition rate diagrams for different state sets} 
    \label{fig:TRD2}
\end{figure}

When there are two customers in the system, there are three possible states $(i,h)$ a customer in service may face: $(1,0), (1,1), (2,0)$. Denote the strategy by 
\[
a(1,0)\, a(1,1)\,| a(2,0) \,.
\]
For each strategy, there are specific transition rates for each $(i,h) \in \mathcal{S}$, based on which we can calculate the steady-state probabilities. For example, if the strategy is $01|1$, the transition rate diagram is depicted in Figure \ref{fig:TRD2}. Note that in this case, three states $(0,0),(0,1),(1,0)$ are transient. States $(0,0)$ and $(0,1)$ do not require action as no one is at service, the decision at state $(1,0)$ does not affect system efficiency. If we only keep the recurrent states, the transition rate diagram is depicted in Figure \ref{fig:TRD2}(b), and the efficiency calculated from it is the same as for the strategy $01|1$ or $11|1$. We will use `$*$' in the strategy to denote the action for the transient states. For example, we use ${*}1|1$ to denote strategy $01|1$ or $11|1$ because they result in the same efficiency. Following similar reasoning, we denote $01|0$ and $00|0$ as $0{*}|0$. In the following discussion, we only consider strategies $0{*}|0, {*}1|1, 00|1, 11|0, 10|0$, and $10|1$. The steady-state probabilities under the six strategies are calculated and shown in Appendix \ref{appendix:1}, with which we can calculate the efficiency of the system using \eqref{eq:SW} or \eqref{eq:SW2}. 

%The reason is as follows. For both $(0,1,1)$ and $(0,1,0)$, the customer in service chooses the same type as the type of the active customer if there is any. Thus the steady-state probability depends on the customer's choice when both customers are inactive. For example, if $a(2,0) = 1$, then the customer at state $(2,0)$ chooses high rate, the next customer in service will choose high rate no matter whether the active customer becomes inactive first or the customer completes her service first. Finally, every customer in the system is using the high rate.  Let $\rho_h = r \rho_l$, where $r > 0$, thus parameters in the efficiency expressions have $r, \rho_l, \mu_h$.

The first interesting result we have is for strategies $10|0$ and $10|1$. It is summarized in Lemma \ref{lemma:101}, whose proof is in Appendix \ref{appendix:101} and \ref{appendix:100}.
\begin{lemma} \label{lemma:101}
	The strategy $10|0$ or $10|1$ is never optimal.
\end{lemma}
% $101$ means that when there is an active customer, she will choose high rate if the other customer is active customer in low rate, and she will choose low rate if the other customer is active customer in high rate. When there are two inactive customers, she will choose the high rate. This is intuitively not socially optimal. [I forget how to explain this.] The situation that the customer chooses the high rate when two customers are queueing means the manager prefers that she joins the queue more quickly even if it is possible that she may need to wait. Intuitively, the manager should have a higher incentive to choose the high rate when the other customer is inactive, since the system will be free when she returns.
% Intuitively, $100$ cannot be optimal, since if a customer chooses low rate when the other customer is inactive, a customer will choose low rate when both customers are queueing. The explanation is similar to that of strategy $101$. A customer chooses low rate when there is an active customer means that the manager does not want her to go back to the service even if the server will be free when she becomes inactive, intuitively if the manager chooses the low rate if the other customer is in service, she will choose low rate if both customers are queueing.
% A similar result holds for strategy $10|0$, and is presented in Lemma \ref{lemma:100}, whose proof is in Appendix \ref{appendix:100}.
\noindent Intuitively, the strategies $10|0$ and $10|1$ cannot be optimal. In state $(1,0)$, the other customer is active at a low rate, while in state $(1,1)$, the other customer is active at a high rate. So the active customer will come to the server sooner if it is $(1,1)$. If the manager selects the high rate when it is $(1,0)$, then she has a higher incentive to choose a high rate at $(1,1)$.

The next interesting result we have is for the strategy $11|0$. It is presented in Lemma \ref{lemma:110}, whose proof is in Appendix \ref{appendix:110}.
\begin{lemma}\label{lemma:110}
	The strategytegy $11|0$ is never optimal.
\end{lemma}
\noindent Strategy $11|0$ means that the manager will choose a high rate if there is an active customer and choose a low rate if both customers are queueing. This is intuitively not optimal as the manager has a higher incentive for dispelling the queue when both customers are queueing.

From Lemmas \ref{lemma:101} and \ref{lemma:110} it follows that the optimal strategy when $N = 2$ can only be ${*}1|1,00|1$ or $0{*}|0$. The following theorem details the parameter regions where ${*}1|1,00|1$ or $0{*}|0$ is optimal. The proof of Theorem \ref{thm:n2} is in Appendix \ref{appendix:thmoptimal}.
\begin{theorem} \label{thm:n2}
	For any fixed $\lambda_l$ and $\mu_l$, there exist two switching curves $\lambda_h = f(\mu_h)$ and $\lambda_h = g(\mu_h)$ such that
	$f(\mu_h) \geq g(\mu_h), \forall \mu_h$.
	If $\lambda_h > f(\mu_h)$, $0{*}|0$ is optimal; if $g(\mu_h) <\lambda_h < f(\mu_h)$, $00|1$ is optimal; if $\lambda_h < g(\mu_h)$, ${*}1|1$ is optimal. In particular, when $\lambda_h/\mu_h = \lambda_l/\mu_l$, $00|1$ is optimal.
\end{theorem}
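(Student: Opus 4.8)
As noted just above (Lemmas \ref{lemma:101}, \ref{lemma:100} and \ref{lemma:110}), the optimal strategy for $N=2$ is one of $0{*}|0$, $00|1$, ${*}1|1$, so the whole problem reduces to comparing these three efficiencies as functions of $(\mu_h,\lambda_h)$, with $(\mu_l,\lambda_l)$ held fixed. The plan is to take the three steady-state vectors computed in Appendix \ref{appendix:1}, write each efficiency through \eqref{eq:SW} (equivalently \eqref{eq:SW2}), and locate the two switching curves from the signs of the pairwise differences.

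A direct simplification of the formulas in Appendix \ref{appendix:1} shows that the two ``pure'' efficiencies depend only on the relevant ratio: $S(0{*}|0)=\phi(\rho_l)$ and $S({*}1|1)=\phi(\rho_h)$, where $\rho_l=\lambda_l/\mu_l$, $\rho_h=\lambda_h/\mu_h$ and $\phi(\rho)=\frac{2(1+\rho)}{2\rho^{2}+2\rho+1}$. Since $\phi$ is strictly decreasing on $(0,\infty)$, one reads off at once that $S(0{*}|0)\gtrless S({*}1|1)$ according as $\rho_h\gtrless\rho_l$, and that these two are equal precisely on the ray $\ell=\{\lambda_h/\mu_h=\lambda_l/\mu_l\}$. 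This disposes of the $0{*}|0$ versus ${*}1|1$ comparison and identifies $\ell$ as the natural dividing line.

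Now set $D_1:=S(0{*}|0)-S(00|1)$ and $D_2:=S(00|1)-S({*}1|1)$; over a common positive denominator the sign of each $D_i$ is that of an explicit polynomial in $(\mu_h,\lambda_h)$. I would establish: (a) for fixed $\mu_h$, $S(00|1)$ is strictly decreasing in $\lambda_h$ --- by differentiating the rational expression, or, more transparently, by coupling two copies of the system so that the one with larger $\lambda_h$ has stochastically shorter high activity periods and hence no more active users on average --- so $D_1$ is strictly increasing in $\lambda_h$ and has a unique zero $\lambda_h=f(\mu_h)$; (b) $D_1\le0$ on the region $\{\rho_h\le\rho_l\}$, whence $f(\mu_h)>\rho_l\mu_h$, i.e.\ the curve $f$ lies above $\ell$; and, by the analogous (but separate, since the type swap carries $00|1$ to the never-optimal $11|0$) computation, (c) $D_2$ is single-crossing in $\lambda_h$ with a unique zero $\lambda_h=g(\mu_h)$, and $D_2\ge0$ on $\{\rho_h\ge\rho_l\}$, so $g(\mu_h)<\rho_l\mu_h$. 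Then $g(\mu_h)<\rho_l\mu_h<f(\mu_h)$, in particular $g\le f$, and $\ell$ lies strictly inside the strip $\{g<\lambda_h<f\}$. The trichotomy follows: on $\{\lambda_h>f\}$ we have $\lambda_h>f>\rho_l\mu_h$, so $\rho_h>\rho_l$ and $S(0{*}|0)>S({*}1|1)$, while $D_1>0$ and (as $\lambda_h>f>g$) $D_2>0$, giving $S(0{*}|0)>S(00|1)>S({*}1|1)$; on $\{g<\lambda_h<f\}$, $D_1<0$ and $D_2>0$, so $00|1$ dominates; on $\{\lambda_h<g\}$, $D_2<0$ and $D_1<0$ (as $\lambda_h<g<f$), so $S({*}1|1)>S(00|1)>S(0{*}|0)$.

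For the final assertion, take $\rho_h=\rho_l=:\rho$. Then $a(i,h)\tfrac{\mu_h}{\lambda_h}+(1-a(i,h))\tfrac{\mu_l}{\lambda_l}\equiv\tfrac1\rho$, so \eqref{eq:SW2} gives $S(a)=\tfrac1\rho\Pr(\text{server busy})$; combining this with \eqref{eq:SW} and the normalization $\sum\pi=1$ yields the strategy-independent identity $S(a)=\bigl(2-\pi^{(a)}_{2,0}\bigr)/(1+\rho)$, so maximizing efficiency is the same as minimizing $\pi_{2,0}$. From the steady-state probabilities one gets $\pi^{0{*}|0}_{2,0}=\pi^{{*}1|1}_{2,0}$, and $\pi^{0{*}|0}_{2,0}-\pi^{00|1}_{2,0}$ having the sign of $\lambda_h-\lambda_l>0$, so $00|1$ is strictly optimal on $\ell$, which also upgrades $g\le f$ to $g<f$. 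The main obstacle is the sign analysis in (b) and (c): certifying the sign of a bivariate polynomial on the half-plane $\{\rho_h\le\rho_l\}$, respectively $\{\rho_h\ge\rho_l\}$. I would handle this by substituting $\lambda_h=\rho\mu_h$ and, using $\mu_h>\mu_l>0$, $\lambda_h>\lambda_l>0$ and the fixed sign of $\rho_l-\rho$, rewriting the resulting single-variable expression as a product of manifestly signed factors; the monotonicity and single-crossing claims in (a) and (c) are the other places where the computation must be carried out in full rather than quoted.
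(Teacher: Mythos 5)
Your reduction to the three candidate strategies and your overall architecture --- define $f$ and $g$ as the unique crossing points in $\lambda_h$ of the two pairwise differences involving $00|1$, then pin both curves down relative to the ray $\ell=\{\lambda_h/\mu_h=\lambda_l/\mu_l\}$ by a sign check on that ray --- is the same as the paper's proof in Appendix \ref{appendix:thmoptimal}. Two of your ingredients are genuinely different and arguably cleaner. First, the observation that $S(0{*}|0)=\phi(\rho_l)$ and $S({*}1|1)=\phi(\rho_h)$ for the same strictly decreasing $\phi(\rho)=2(1+\rho)/(2\rho^{2}+2\rho+1)$ settles the pure-versus-pure comparison and identifies $\ell$ at a glance; the paper never compares these two strategies directly (their ordering is obtained transitively through $00|1$, as you also do in the final trichotomy, so this is a bonus rather than a necessity). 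Second, your identity $S(a)=(2-\pi^{(a)}_{2,0})/(1+\rho)$ on $\ell$, which turns the last assertion into minimizing the full-queue probability, is slicker than the paper's route of evaluating both differences at $\lambda_h=\lambda_l\mu_h/\mu_l$ and finding a common negative value; and your claim that $\pi^{0{*}|0}_{2,0}-\pi^{00|1}_{2,0}$ has the sign of $\lambda_h-\lambda_l$ does check out (with $u=\mu_l/\lambda_l=\mu_h/\lambda_h$ on $\ell$, the difference reduces to $2u^{2}(1+u)(\lambda_h-\lambda_l)$ over a positive denominator).

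The caveat is that essentially all of the labor in the paper's proof sits in exactly the steps you defer. For $D_1$ the single crossing is structural and cheap: its sign is that of a quadratic in $\lambda_h$ with positive leading coefficient and negative constant term, so you do not even need the monotonicity of $S(00|1)$ in $\lambda_h$ that you propose to prove by coupling (a coupling argument is delicate here anyway, since shortening one activity period perturbs the entire subsequent trajectory). But $D_2=S(00|1)-S({*}1|1)$ is a difference of two functions that are \emph{both} decreasing in $\lambda_h$, so single crossing is not structural; the paper proves it by showing that the governing cubic $\delta_2$ (negative leading coefficient, positive constant term) has a derivative that is positive then negative, hence $\delta_2$ rises from a positive value and then falls monotonically to $-\infty$. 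Until you supply that argument, or an equivalent factorization, your step (c) is an assertion rather than a proof --- it is true, but it is the one place where the sketch cannot be completed by inspecting signs at $\lambda_h=0$ and at infinity.
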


The optimal strategies for different values of $\lambda_h$ and $\mu_h$ when $\mu_l = 1$ and $\lambda_l = 0.5, 0.7, 0.9$ are depicted in Figure \ref{fig:Optimalan2}. It is optimal to always stick to a service type if it is much more efficient. If the two service modes do not differ very much in terms of service efficiency, adopting the fast service when the other customer is queueing and the slow service otherwise is optimal. Fix $\lambda_l, \, \mu_h,$ and $\mu_l$, then as $\lambda_h$ increases, the service efficiency of the fast service decreases, and the low service becomes more efficient compared to the fast service, so the optimal strategy changes from $*1|1$, to $00|1$, then to $0{*}|0$.

% \begin{figure}[ht]
% 	\subcaptionbox{$\mu_l = 1, \lambda_l = 0.5$}%
% 	{\includegraphics[width=0.33\linewidth]{./Plots/5n2.eps}}
% 	\subcaptionbox{$\mu_l = 1, \lambda_l = 0.7$}%
% 	{\includegraphics[width=0.329\linewidth]{./Plots/7n2.eps}}
% 	\subcaptionbox{$\mu_l = 1, \lambda_l = 0.9$}%
% 	{\includegraphics[width=0.329\linewidth]{./Plots/9n2.eps}}
% 	\caption{The optimal strategy regions for different parameter settings when $N = 2$.}	\label{fig:Optimalan2}
% \end{figure}

\begin{figure}[!ht]
    \centering
    \begin{minipage}{0.33\textwidth}
        \centering
        \includegraphics[width=\linewidth]{./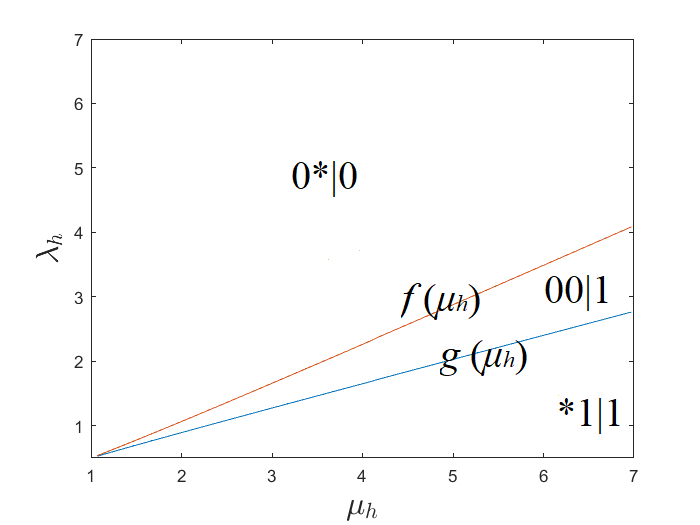}
        \textbf{(a)} 
        $\mu_l = 1, \lambda_l = 0.5$
    \end{minipage}%
    \begin{minipage}{0.33\textwidth}
        \centering
        \includegraphics[width=\linewidth]{./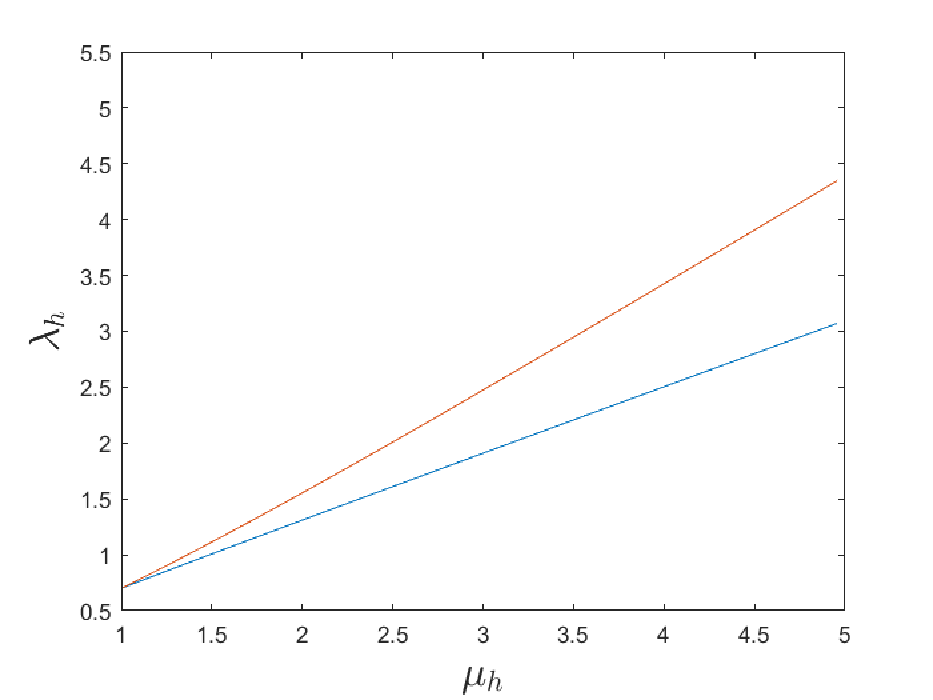}
        \textbf{(b)}
        $\mu_l = 1, \lambda_l = 0.7$
    \end{minipage}%
    \begin{minipage}{0.33\textwidth}
        \centering
        \includegraphics[width=\linewidth]{./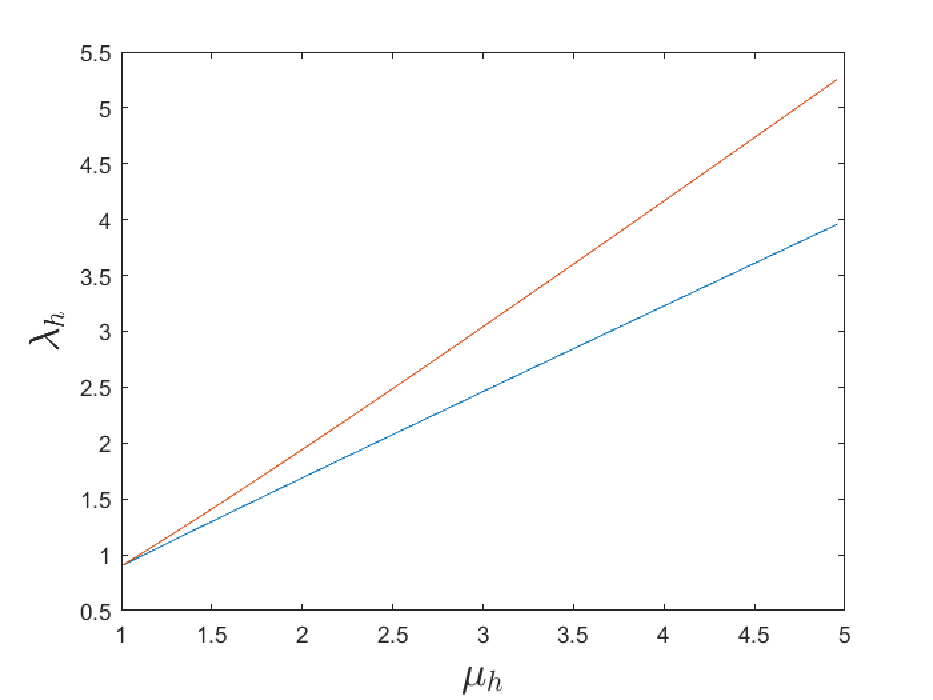}
        \textbf{(c)} 
        $\mu_l = 1, \lambda_l = 0.9$
    \end{minipage}
    \vspace*{10pt}
    \caption{The optimal strategy regions for different parameter settings when $N = 2$.} \label{fig:Optimalan2}
\end{figure}

\subsection{The case $N = 3$}

When $N = 3$, we denote the strategy by
\[
a(1,0)a(1,1)a(1,2)|a(2,0)a(2,1)|a(3,0) \,.
\]
There are two action options (high or low rate) for each $a(i,h)$, therefore, there is a total of 64 possible strategies. Similar to strategies $01|0$ and $00|0$, or $01|1$ and $11|1$, in the $N=2$ case, different strategies also may lead to the same stationary distribution, thus the same system efficiency when $N = 3$. For example, if we use strategy $000|10|0$, the transition rate diagram is shown in Figure \ref{fig:TRD3}, from which we can see that the state $(1,2)$ is transient. (States $(0,2), (0,3)$ are also transient, but do not require any action.) Thus the action on state $(1,2)$ does not matter. In this case, we use $00{*}|10|0$ to denote the strategy. 
%, which means, $000|10|0$ and $001|10|0$ result in the same system efficiency.
\begin{figure}[h]
	\centering
	{\includegraphics[width=0.5\linewidth]{./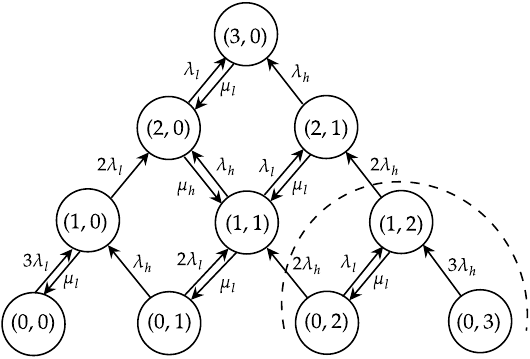}}
	\caption{The transition rate diagram for strategy $000|10|0$.  The dashed curve separates the transient states from the recurrent states.} \label{fig:TRD3}
\end{figure}

For each strategy, we can write down the transition rate diagram similar to Figure \ref{fig:TRD3}, based on which we calculate the optimal strategy. The optimal strategy among the $64$ strategies for different parameter settings when $\mu_l = 1$ and $\lambda_l = 0.5, 0.7, 0.9$ are represented in Figure \ref{fig:Optimalan3}.  Based on our numerical results we reach the following conclusion.
\begin{observation} \label{conjecture:1}
	For any fixed value $\lambda_l$ and $\mu_l$, there exists a value $\hat{\lambda}_h$ such that if we fix $\lambda_h$, when $\mu_h$ increases from $\mu_l$ to $\infty$, the evolvement of the best strategy is
	\[
	0{**}|0{*}|0 \rightarrow 00{*}|00|1 \rightarrow 00{*}|10|1 \rightarrow 000|11|1 \rightarrow {**}1|{*}1|1
	\]
	if $\lambda_l < \lambda_h \leq \hat{\lambda}_h$, and
	\[
	0{**}|0{*}|0 \rightarrow 00{*}|10|0 \rightarrow 00{*}|10|1 \rightarrow 000|11|1  \rightarrow {**}1|{*}1|1
	\]
	if $\lambda_h > \hat{\lambda}_h$.
\end{observation}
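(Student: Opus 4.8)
The plan is to read Observation \ref{conjecture:1} as a purely algebraic statement about finitely many rational functions. For $N=3$ the chain on $\mathcal S$ has $10$ states, every pure strategy $a$ determines a generator $Q^{(a)}$, and by \eqref{eq:SW} the efficiency $S(a)$ is a rational function of $(\mu_h,\lambda_h)$ once $(\mu_l,\lambda_l)$ are fixed. So the observation asserts which member of a small family of such functions is largest along the ray $\mu_h\in[\mu_l,\infty)$ with $\lambda_h$ held fixed, and how that answer changes with $\lambda_h$. The first task is to shrink the $64$ strategies to the six named in the observation. Exactly as in Figure \ref{fig:TRD3}, many strategies make some of the states $(0,h)$ or $(1,h)$ transient, so their actions there are irrelevant and collapse into the starred notation; this already removes most of the list. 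For the rest I would extend the domination arguments behind Lemmas \ref{lemma:101}--\ref{lemma:110}: a strategy exhibiting the ``$10$'' pattern (high rate when a competitor is active at low rate, low rate when that competitor is active at high rate) or the ``$\cdots1|\cdots1|0$'' pattern (low when everyone queues, high otherwise) can be strictly improved by flipping the offending action, since the state it then leads to is stochastically more favourable. Carried out for $N=3$, this should leave only $0{**}|0{*}|0$, $00{*}|00|1$, $00{*}|10|0$, $00{*}|10|1$, $000|11|1$ and ${**}1|{*}1|1$ as possible maximizers.

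Next I would solve $\pi^{(a)}Q^{(a)}=0$, $\pi^{(a)}\mathbf 1=1$ on the recurrent class of each surviving strategy symbolically and substitute into \eqref{eq:SW} (cross-checking against \eqref{eq:SW2}), obtaining $S(a)=P_a(\mu_h,\lambda_h)/D_a(\mu_h,\lambda_h)$ with $D_a>0$ on the region of interest. The two ends of the sequence are then immediate: at $\mu_h=\mu_l$ the high service is strictly less efficient because $\lambda_h>\lambda_l$, so $0{**}|0{*}|0$ wins, and as $\mu_h\to\infty$ the high service dominates, so ${**}1|{*}1|1$ wins; this is the expected behaviour since at each end of the ray one service type is uniformly more efficient. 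For the middle, fix $\lambda_h$ and put $t=\mu_h$; for each relevant pair $(a,b)$ the sign of $S(a)-S(b)$ equals the sign of a single-variable polynomial $R_{ab}(t)$ after clearing denominators. The observation then reduces to three claims: (i) each $R_{ab}$ separating two consecutive strategies in the sequence has exactly one root in $(\mu_l,\infty)$; (ii) these roots occur in the stated order; and (iii) along the swept region no discarded candidate ever overtakes the running maximum. I would prove (i) by a Sturm-sequence or Descartes-rule argument on $R_{ab}$, (ii) by combining (i) with the $t\to\infty$ and $t\to\mu_l^+$ sign orderings of the $R_{ab}$, and (iii) by checking that the remaining finitely many pairwise differences are sign-definite on that region.

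Finally, $\hat\lambda_h$ should be identified as the unique value of $\lambda_h$ at which the three switching curves $S(0{**}|0{*}|0)=S(00{*}|00|1)$, $S(0{**}|0{*}|0)=S(00{*}|10|0)$ and $S(00{*}|00|1)=S(00{*}|10|0)$ meet at a common point of the $(\mu_h,\lambda_h)$-plane; below this triple point the region where $00{*}|00|1$ is the upper envelope is nonempty and $00{*}|10|0$ never surfaces, and above it the two roles are exchanged. Locating this triple point and proving its uniqueness is a resultant (elimination) computation in $(\mu_h,\lambda_h)$.

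I expect the main obstacle to be sheer algebraic size: the $D_a$ and $R_{ab}$ are dense polynomials of moderately high degree, so the symbolic solution and the sign analysis really require a computer algebra system, and the delicate point is upgrading the numerically observed single-root and correct-order behaviour of the $R_{ab}$ into a statement that holds for \emph{every} admissible $(\mu_l,\lambda_l)$ with $\mu_l<\mu_h$ and $\lambda_l<\lambda_h$, not just for sampled values; a secondary obstacle is making the domination step airtight so that the long list of discarded strategies is provably never optimal. A dynamic-programming reading, as in Section \ref{sec:DP} — weighing the immediate efficiency rate of an action against the value of the state it induces — explains intuitively why the hand-offs happen in this order, but does not by itself deliver the exact sequence, so the algebraic route above appears unavoidable.
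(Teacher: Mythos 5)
There is an important mismatch of expectations here: the paper does \emph{not} prove this statement. It is labelled an Observation and is explicitly presented as a ``conclusion based on our numerical results,'' namely brute-force evaluation of the efficiency of all $64$ strategies over a grid of $(\mu_h,\lambda_h)$ values (Figure \ref{fig:Optimalan3}), supplemented by a dynamic-programming verification of $000|10|0$ at a single parameter point in Section \ref{sec:DP}. Your proposal is therefore more ambitious than the paper itself, and the overall architecture (six surviving candidates, pairwise rational-function comparisons, single-root and ordering claims, a triple point characterizing $\hat\lambda_h$) is a reasonable program for upgrading the observation to a theorem. But as written it is a program, not a proof: steps (i)--(iii) and the uniqueness of the triple point are precisely where all of the content of the observation lives, and you leave them as to-dos, conceding yourself that the ``delicate point'' of making the numerically observed root behaviour hold for every admissible $(\mu_l,\lambda_l)$ is unresolved. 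In that sense the proposal establishes nothing beyond what the paper's numerics already support.

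There is also one concretely unsound step. Your pruning rule says that any strategy with the ``low when everyone queues, high otherwise'' pattern can be strictly improved by flipping the offending action ``since the state it then leads to is stochastically more favourable.'' Strategy $00{*}|10|0$ has exactly this pattern (high at $(2,0)$, low at $(3,0)$), yet it appears in your own survivor list and, more to the point, the paper shows it is optimal on a nonempty parameter region. The paper's dynamic-programming discussion is precisely a demonstration of why the flip argument fails for $N=3$: in the worked example $V(1,0)=0.9821>0.5986=V(1,1)$, so the low action leads to the more valuable next state, and yet $R_1(2,0)>R_0(2,0)$ because the comparison is rate-weighted, $\mu_l\,(V(1,0)-V(2,0))<\mu_h\,(V(1,1)-V(2,0))$. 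So a value-monotonicity domination argument of the kind that works for $N=2$ (Lemmas \ref{lemma:101}--\ref{lemma:110}, which are themselves proved by explicit algebraic comparison of stationary distributions, not by a flip argument) cannot be ``extended'' to $N=3$ in the way you describe; the reduction from $64$ to $6$ candidates would have to be done by the same heavy pairwise algebra you defer to the later steps, or by a genuinely new structural argument that the paper does not supply either.
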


% \begin{figure}[ht]
% \label{fig:OptimalS}
% 	\subcaptionbox{$\mu_l = 1, \lambda_l = 0.5$}%
% 	{\includegraphics[width=0.33\linewidth]{./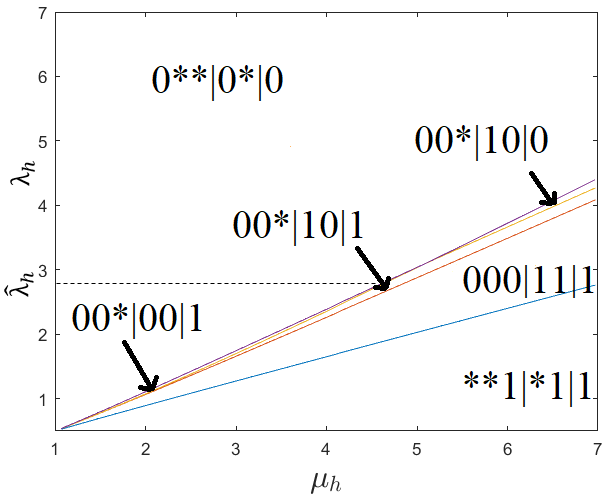}}
% 	\subcaptionbox{$\mu_l = 1, \lambda_l = 0.7$}%
% 	{\includegraphics[width=0.33\linewidth]{./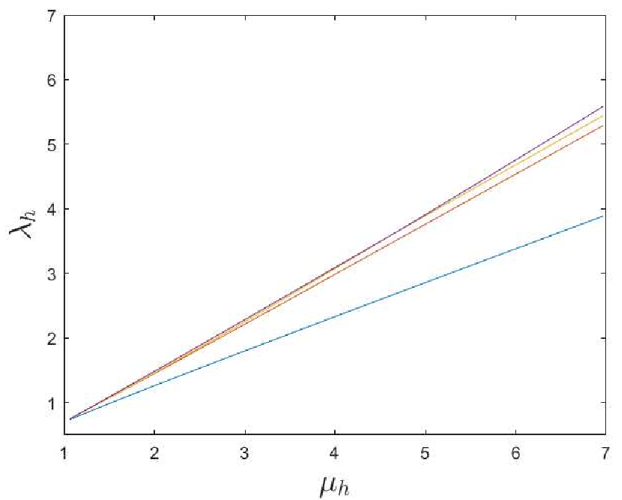}}
% 	\subcaptionbox{$\mu_l = 1, \lambda_l = 0.9$}%
% 	{\includegraphics[width=0.329\linewidth]{./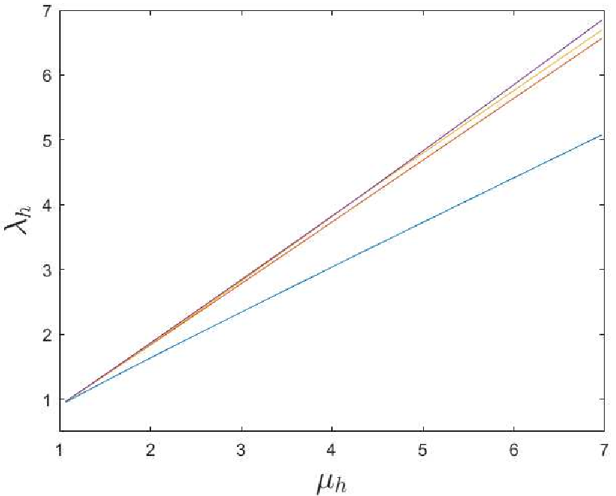}}
% 	\caption{The optimal strategy regions for different parameter settings when $N = 3$.}	\label{fig:Optimalan3}
% \end{figure}

\begin{figure}[ht]
    \centering
    \begin{minipage}{0.33\textwidth}
        \centering
        \includegraphics[width=\linewidth]{./Plots/5n3.png}
        \textbf{(a)} 
        $\mu_l = 1, \lambda_l = 0.5$
    \end{minipage}%
    \begin{minipage}{0.33\textwidth}
        \centering
        \includegraphics[width=\linewidth]{./Plots/7n3.png}
        \textbf{(b)} 
        $\mu_l = 1, \lambda_l = 0.7$
    \end{minipage}%
    \begin{minipage}{0.33\textwidth}
        \centering
        \includegraphics[width=\linewidth]{./Plots/9n3.png}
        \textbf{(c)} 
        $\mu_l = 1, \lambda_l = 0.9$
    \end{minipage}
    \vspace*{10pt}
    \caption{The optimal strategy regions for different parameter settings when $N = 3$.}
    \label{fig:Optimalan3}
\end{figure}
	
Here, we do not look into the value of $\hat{\lambda}_h$ and only explain the evolvement of strategies with $\mu_h$ if we fix $\lambda_l$, $\mu_l$, and $\lambda_h$. When $\lambda_h \leq \hat{\lambda}_h$, if $\mu_h$ is small, the optimal strategy is to use the low rate service in any state, since the service efficiency $\mu_h/\lambda_h$ is low. As $\mu_h$ increases, the optimal strategy switches to using the high rate if and only if the two other customers are inactive. As $\mu_h$ keeps increasing, it is optimal to use the high rate if the two others are either inactive or one is inactive and the other is active at a low rate. As $\mu_h$ keeps increasing, it is optimal to use the high-rate service when there is at least one other inactive customer. When $\mu_h$ is large enough, it is optimal to use the high-rate service in any state.
	
When $\lambda_h > \hat{\lambda}_h$, the optimal strategy is still using the low rate in any state when $\mu_h$ is small and as $\mu_h$ increases, using the high rate when there is at least one other inactive customer. When $\mu_h$ is large enough, it is optimal to use the high-rate service in any state. However, strategy $00{*}|10|0$ is possible. This strategy is to use the low rate when the two other customers are inactive, and use the high rate when one other is inactive and the other is active at a low rate.
	
Figure \ref{fig:Optimalan3} shows both evolvement cases for three parameter settings. Note that the line $\lambda_h = \mu_h \, \lambda_l/\mu_l$ is between the line separating $00{*}|10|1$ and $000|11|1$, and the line separating $000|11|1$ and ${**}1|{*}1|1$. That is, when $\lambda_h = \mu_h \, \lambda_l/\mu_l$, $000|11|1$ is optimal. We proved in Lemma \ref{lemma:110} that when $N=2$, strategy $11|0$ is never optimal, because the queue motivates high service. That is, the manager wants the customer in service to choose a high rate when the queue is long and a low rate when the queue is short. Our findings lead to Conjecture \ref{Conj:1}, that when the two services are equally efficient, choosing the low rate if and only if there is only one inactive customer is optimal for any $N$. Interestingly, when $N = 3$, strategy $00{*}|10|0$ may be optimal. This means adopting {\it low} rate when the other two customers are inactive, and {\it high} rate when one customer is active.
%and {\it high} rate when one other customer is queueing and the other is active at a low rate. 
We find this surprising and look into it using dynamic programming.

\subsubsection{Dynamic programming}
\label{sec:DP}

Let $\left({I}(t), H(t)\right)$ be the Markov decision process with strategy $a$ on the state space $\mathcal{S}$, then $N-I(t)$ is the reward rate at time $t$. Define the maximal system efficiency
\begin{equation*}
	S^* \equiv  \limsup_{t\rightarrow \infty} \frac{\mathbb{E}_a \left[\int_{\tau = 0}^{t} (N-I(\tau)) \,d\tau \right]}{t}.
\end{equation*}
%and let $a^*$ be the strategy achieving efficiency $S^*$.

Our model is a controlled Markov jump process that evolves in continuous time, and in a discrete state space \citep[see][Chapter 5]{B11}. For $(i,h), \, (i',h') \in \mathcal{S}$, let $q_{(i,h),(i',h')}(a)$ be the transition rate from state $(i,h)$ to $(i',h')$ under strategy $a$. When $N = 3$, let $\Delta \geq \max_{(i,h)}\sum_{(i',h')} q_{(i,h),(i',h')}(a)$, for $(i,h)$, $i \in \{1,2,3\}, h \in \{0,1,\ldots, 3-i\}$. The optimality equation after the uniformization \citep[see][Propositions 5.3.1, 5.3.2]{B11} is
\begin{align} \label{eq:V1}
V(i,h) +  \frac{S^*}{\Delta} &  \, =	\, \frac{N-i}{\Delta}+ \frac{h \lambda_h}{\Delta} \, V(i+1,h-1)+ \frac{(N-i-h)\lambda_l}{\Delta} \, V(i+1,h)\\
	&+\max \left\{\underbrace{\frac{\mu_h}{\Delta} \, V(i-1,h+1) +\left(1-\frac{\mu_h+h\lambda_h+(N-i-h)\lambda_l}{\Delta}\right)V(i,h)}_{R_1(i,h)} \right.\,; \notag\\
	& \qquad \qquad \left.\underbrace{ \frac{\mu_l}{\Delta} \, V(i-1,h)  +\left(1-\frac{\mu_l+h\lambda_h+(N-i-h)\lambda_l}{\Delta}\right)V(i,h)}_{R_0(i,h)}\} \right\} \,,\notag
\end{align}
where $V: \mathcal{S} \rightarrow \mathbb{R}$ is the relative value function.
%\begin{equation}
%	V(i,h) = \max_a \left\{\frac{1}{\Delta} \, \left[r(i,h) - S^*+ \sum_{(i',h') \in \mathcal{S}} q_{(i,h),(i',h')}(a)V(i',h')\right]\right\} \,,
%\end{equation}
%$1/q_{(i,h)}(a)$ is the expected transition time corresponding to $a$ and $(i,h)$, $r(i,h)/q_{(i,h)}(a)$ is the single stage expected reward,
%\[
%V(i,h) \equiv  \lim_{t\rightarrow \infty}\left[ \mathbb{E}_{a^*} \left[\int_{\tau = 0}^{t} r \left(I(\tau), H(\tau) \right) \,d\tau \mid I(0) = i, H(0) = h\right] - t\, SU^*\right]\,
%\]
The values of $R_0(i,h)$ and $R_1(i,h)$ correspond to the future expected reward, where $R_0(i,h)$ relates to $a(i,h) = 0$ and $R_1(i,h)$ relates to $a(i,h) = 1$. 

States $(i,h)$, $i=0, h \in \{0,1,\ldots,N\}$ do not require any action and satisfy
\begin{small}$$V(0,h) + \frac{S^*}{\Delta} =
\frac{N}{\Delta}+\frac{h\lambda_h}{\Delta} V(1,h-1)+\frac{(N-h)\lambda_l}{\Delta} V(1,h) + \left(1-\frac{h\lambda_h + (N-h)\lambda_l}{\Delta}\right)V(0,h).
$$\end{small}
After excluding identical terms from $R_0(i,h)$ and $R_1(i,h)$, we have $R_0(i,h) > R_1(i,h)$ if and only if 
\begin{equation} \label{eq:inequality}
    \mu_l \, (V(i-1,h) - V(i,h)) > \mu_h \, (V(i-1,h+1) - V(i,h)) \,.
\end{equation}

From \eqref{eq:V1}, it can be seen that at each state $(i,h), \, i > 0$, a customer needs to consider three factors before making the decision: the value of next state $(i-1,h)$ or $(i-1,h+1)$, the probability to go to that state, and the probability to remain in the current state.

Now we select the parameter settings in the region in Figure \ref{fig:Optimalan3} where $00{*}|10|0$ is the optimal strategy, and set $\Delta = (3\lambda_h+3\lambda_l+\mu_h+\mu_l)$. Also, since $V(i,h)$ is a relative value, we can arbitrarily select a state, fix its relative value and calculate relative values for other states. Here we set $V(3,0) = 0$. Since $\left({I}(t), H(t)\right)$ is a Markov decision process, Equation \eqref{eq:V1} can be solved by methods such as value iteration or policy iteration. Here we solve it in a different way. Since we already know from the previous calculation that $000|10|0$ is the optimal strategy, we use it to easily compute the relative functions $V(i,h)$ given the strategy is $000|10|0$, by solving a set of 6 linear equations (with $V(3,0) = 0$), and then verify that $000|10|0$ is optimal.

% \begin{table}[H]
% % \makegapedcells
% \centering
% {\footnotesize \begin{tabular}{|ccccccccccc|}
%     \cline{1-11}
% 		 \makecell[c]{$S^*$ } & \makecell[c]{$V(0,0)$} & \makecell[c]{$V(0,1)$} & \makecell[c]{$V(0,2)$} & \makecell[c]{$V(0,3)$} &
% 		 \makecell[c]{$V(1,0)$} &
% 		 \makecell[c]{$V(1,1)$} &
% 		 \makecell[c]{$V(1,2)$} &
% 		 \makecell[c]{$V(2,0)$} &
% 		 \makecell[c]{$V(2,1)$} &
% 		 \makecell[c]{$V(3,0)$}  \\ \hline
% 		$0.5269$  & $1.2974$  &  $1.0002$  & $0.6287$  &  $0.1989$  & $0.9821$  & $0.5986$  & $0.1590$  & $0.5269$  & $0.0818$  & $0$\\
% 		\hline
% \end{tabular}}
% \caption{Relative value functions when $\mu_h = 6.38, \lambda_h = 3.95, \mu_l = 1, \lambda_l = 0.5$ under the optimal strategy $000|{1}0|0$.}
% \label{tab:V}
% \end{table}

\begin{table}[!ht]
\caption{Relative value functions when $\mu_h = 6.38, \lambda_h = 3.95, \mu_l = 1, \lambda_l = 0.5$ under the optimal strategy $000|{1}0|0$.} \label{table:mechanical_property}
\vspace*{10pt}
\centering
        {\def\arraystretch{1.5}  %change the number for increasing or decreasing the spacing.
            %\scriptsize %uncomment this for changing the font size
{\footnotesize \begin{tabular*}{1\textwidth}{@{\extracolsep{\fill}}ccccccccccc}
\hline
\hline
$S^*$ & $V(0,0)$ & $V(0,1)$ & $V(0,2)$ & $V(0,3)$ &
$V(1,0)$ &
$V(1,1)$ &
$V(1,2)$ &
$V(2,0)$ &
$V(2,1)$ &
$V(3,0)$  \\ \hline
$0.5269$  & $1.2974$  &  $1.0002$  & $0.6287$  &  $0.1989$  & $0.9821$  & $0.5986$  & $0.1590$  & $0.5269$  & $0.0818$  & $0$\\
\hline
\hline
\end{tabular*}
}
}
\end{table}

\begin{figure}[h]
	\centering
	{\includegraphics[width=0.5\linewidth]{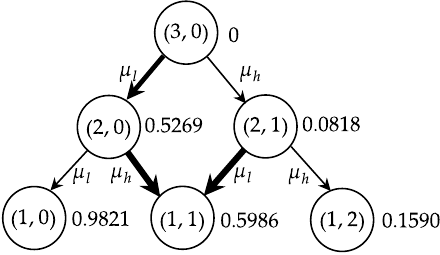}}
	\caption{The decision diagram with bold arrows denoting action for each state.}\label{fig:Decision}
\end{figure}
Table \ref{tab:V} represents the relative function values and the optimal system efficiency calculated for one numerical experiment. The decisions for states $(3,0)$, $(2,1)$ and $(2,0)$ and the states' relative values are depicted in Figure \ref{fig:Decision}. It can be seen that $V(1,0) = 0.9821 > 0.5986 = V(1,1)$.   
However, the probabilities to go to state $(1,0)$ and remain in the current state are $0.0482$ and $0.9518$, respectively if $a(2,0) = 0$. The probabilities to go to state $(1,1)$ and remain in the current state are $0.3078$ and $0.6922$, respectively, if $a(2,0) = 1$. Although $V(1,0) > V(1,1)$,
% \[
% 0.0482 \, V(1,0) + 0.9518 \, V(2,0) < 0.3078 \, V(1,1) + 0.6922 \, V(2,0) \,,
% \]}
\[
V(1,0) - V(2,0) \quad < \quad 6.38 \, \, (V(1,1) - V(2,0)) \,.
\]
It can be checked that $R_0(i,h) > R_1(i,h)$ for the other states, so $000|10|0$ is optimal. Of course, $S^*$ equals what we obtained from the steady-state probabilities. %\citep[Proposition 5.3.1]{B11}.
As explained earlier, there may be more than one strategy that results in the same system efficiency. From our numerical experiments, $001|10|0$ does {\it not} satisfy Equation \eqref{eq:V1} because it prescribes a non-optimal choice for a transient state, but it still results in the same system efficiency as $000|10|0$ and is therefore also optimal.

\subsection{General $N$}

% For a general value of $N$, when $\mu_h/\lambda_h > \mu_l/\lambda_l$, it seems that it is better to make $a(i,h) = 1, \, \forall i,h$. However, the strategy also affects $\pi(i,h)$. Taking the server idle probability into consideration, we have the following theorem.
When $\mu_h/\lambda_h > \mu_l/\lambda_l$, it seems that it is better to always choose the high-rate service. However, the strategy also affects $\pi(i,h)$ and we need to consider the server idle probability. Thus, this is only true when the system is almost always busy, which is described in the following theorem. 
\begin{theorem} \label{con:1}
 When $N \rightarrow \infty$, the optimal strategy is to implement the more efficient service mode in any state.
\end{theorem}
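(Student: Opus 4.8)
The plan is to bound the efficiency of an \emph{arbitrary} strategy from above by the ``per–unit–service'' efficiency of the more efficient mode, and then show that always using that mode attains the bound in the limit. Assume without loss of generality that the high mode is (weakly) the more efficient one, i.e.\ $\mu_h/\lambda_h \ge \mu_l/\lambda_l$; the reverse case follows by relabelling $h$ and $l$ throughout. Write $a^{\star}$ for the strategy with $a^{\star}(i,h)=1$ on all of $\mathcal{S}$, and $p_0(a):=\sum_h \pi^{(a)}_{0,h}$ for the server–idle probability under a strategy $a$. The first step uses \eqref{eq:SW2}: since for every state the coefficient $a(i,h)\,\mu_h/\lambda_h+(1-a(i,h))\,\mu_l/\lambda_l$ lies in $[\mu_l/\lambda_l,\mu_h/\lambda_h]$, we obtain, for every $N$ and every strategy $a$,
\[
S(a)\ \le\ \frac{\mu_h}{\lambda_h}\sum_{i=1}^{N}\sum_{h}\pi^{(a)}_{i,h}\ =\ \frac{\mu_h}{\lambda_h}\bigl(1-p_0(a)\bigr)\ \le\ \frac{\mu_h}{\lambda_h},
\]
a uniform upper bound on the attainable efficiency.

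The second step controls the idle probability uniformly in the strategy, directly from \eqref{eq:SW}: the $i=0$ block of that sum equals $N\,p_0(a)$ and all remaining terms are nonnegative, so $S(a)\ge N\,p_0(a)$; combined with the previous display this yields $p_0(a)\le \dfrac{\mu_h}{\lambda_h N}$ for \emph{every} strategy $a$, and in particular $p_0(a^{\star})\to 0$ as $N\to\infty$. Under $a^{\star}$ every busy state is served at the high rate, so \eqref{eq:SW2} collapses to the exact identity $S(a^{\star})=\dfrac{\mu_h}{\lambda_h}\bigl(1-p_0(a^{\star})\bigr)$. (Equivalently, under $a^{\star}$ the number of low--rate active users can only decrease, so only the states $(i,N-i)$ are recurrent and the chain is the classical finite--source birth--death chain with up--rate $(N-i)\lambda_h$ and down--rate $\mu_h$, whose empty--server probability $\bigl(\sum_{i=0}^{N}(\lambda_h/\mu_h)^{i}\,N!/(N-i)!\bigr)^{-1}$ visibly vanishes as $N\to\infty$; the bound $p_0\le \mu_h/(\lambda_h N)$ makes this quantitative without the explicit formula.)

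Combining the two steps, for every strategy $a$ and every $N$,
\[
S(a^{\star})\ \ge\ \frac{\mu_h}{\lambda_h}-\frac{\mu_h^{2}}{\lambda_h^{2}N}\ \ge\ S(a)-\frac{\mu_h^{2}}{\lambda_h^{2}N},
\]
hence $0\le \sup_a S(a)-S(a^{\star})\le \mu_h^{2}/(\lambda_h^{2}N)\to 0$, i.e.\ always employing the more efficient mode is optimal in the limit $N\to\infty$, which is the assertion.

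The genuine difficulty is that this argument delivers only \emph{asymptotic} (rate $O(1/N)$) optimality of $a^{\star}$: at any finite $N$ one has $S(a^{\star})=\frac{\mu_h}{\lambda_h}\bigl(1-p_0(a^{\star})\bigr)<\frac{\mu_h}{\lambda_h}$, so $a^{\star}$ does not meet the universal upper bound exactly and the bounding argument cannot by itself certify that it is \emph{the} maximiser for large finite $N$. If the sharper ``there is $N_0$ such that $a^{\star}$ is optimal for all $N\ge N_0$'' is wanted, the route would be through the optimality equation \eqref{eq:V1}: take the relative value function $V$ generated by $a^{\star}$ and verify $R_1(i,h)>R_0(i,h)$ at every state, i.e.\ $\mu_h\bigl(V(i-1,h+1)-V(i,h)\bigr)>\mu_l\bigl(V(i-1,h)-V(i,h)\bigr)$. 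The delicate point there is to control $V$ on the short--queue states $(0,h)$ and $(1,h)$ — precisely the states the $a^{\star}$--chain visits with probability $O(1/N)$ — uniformly in $N$; the bounding proof above is attractive precisely because it sidesteps this estimate, at the cost of proving only limiting optimality.
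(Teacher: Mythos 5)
Your proof is correct and follows essentially the same route as the paper's: bound the per‑unit‑service contribution in \eqref{eq:SW2} by $\mu_h/\lambda_h$ and argue that the server‑idle probability vanishes, so that always choosing the more efficient mode attains the bound in the limit. The one genuine improvement is that you actually prove the vanishing of the idle probability — combining $S(a)\ge N\,p_0(a)$ from \eqref{eq:SW} with $S(a)\le \mu_h/\lambda_h$ to get $p_0(a)\le \mu_h/(\lambda_h N)$ uniformly over strategies — whereas the paper merely asserts that $\pi_{0,h}\to 0$; this also yields an explicit $O(1/N)$ optimality gap, and your closing caveat that the argument certifies only limiting (not finite‑$N$) optimality is an accurate reading of what both proofs establish.
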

\proof{Proof of Theorem \ref{con:1}}
    When $N \rightarrow \infty$, the server tends to always be busy, $\pi_{0,h}$ tends to zero $\forall h$, and
    \[
    \sum_{i = 1}^{N} \sum_{h=0}^{N-i} \, \pi^{(a)}_{i,h} \rightarrow 1  \,.
    \]
    It follows from Equation \eqref{eq:SW2} that to maximize $S(a)$, we need to maximize
    \[
    \sum_{i = 1}^{N} \sum_{h=0}^{N-i} \, \left(a(i,h) \frac{\mu_h}{\lambda_h} + (1-a(i,h)) \frac{\mu_l}{\lambda_l}\right) \, \pi^{(a)}_{i,h} \, \leq \,  \frac{\mu_r}{\lambda_r} \,,
    \]
    where $r = \arg\max_{q \in \{h,l\}} \, \cfrac{\mu_q}{\lambda_q}$.  %\blue{I changed the index on the rhs to $q$ - it was $r$ and I think we cannot use $r$ on both sides, right?}
    Therefore we shall set $a(i,h) = \mathbbm{1}_{\{\frac{\mu_h}{\lambda_h} > \frac{\mu_l}{\lambda_l}\}}$ for any $i>0, h = 0,1,\ldots, N-i$.

We now introduce the definition of a threshold strategy. A threshold strategy with threshold $n \in \mathbb{N}$, has 
\begin{equation} \label{eq:threshold1}
	a(i,h) =
	\begin{cases}
		0 & N-i \geq n, \\
		1 & N-i < n.
	\end{cases} \,
\end{equation}
We will use $n$ to denote a threshold strategy, and $n^*$ to denote the threshold strategy that results in maximal (social) welfare.

In the $N=2$ case, for every fixed $\lambda_l, \mu_l$, and $\lambda_h$, when $\mu_h$ increases, the evolvement of the best strategy is
\[
0{*}|0  \rightarrow 00|1 \rightarrow {*}1|1 \,.
\]
We know that ${*}1|1$ and $0{*}|0$ result in the same system efficiency as $11|1$ and $00|0$, respectively. Strategies $00|0$, $00|1$, and $11|1$ are threshold strategies with threshold $n = 0,1,2$, respectively. This means if we only focus on threshold strategies, we are still be able to find the optimal social welfare for any parameter setting.
When $N = 3$, as described in Conjecture \ref{conjecture:1}, the evolution of the best strategy is either
\[
	0{**}|0{*}|0 \rightarrow 00{*}|00|1 \rightarrow 00{*}|10|1 \rightarrow 000|11|1 \rightarrow {**}1|{*}1|1
\]
or
\[
	0{**}|0{*}|0 \rightarrow 00{*}|10|0 \rightarrow 00{*}|10|1 \rightarrow 000|11|1  \rightarrow {**}1|{*}1|1 \,.
\]
Strategies $000|00|0$, $000|00|1$, $000|11|1$, and $111|11|1$ are threshold strategies with threshold $x = 0, 1,2,3$, respectively. However, strategies $00{*}|10|1$ and $00{*}|10|0$ are not of threshold type.

{\bf Remark}: When $N = 2,3$, if the slow and fast services are equally efficient, i.e. $\lambda_h/\mu_h = \lambda_l/\mu_l$, then the optimal strategy is of threshold type, and it is $n^* = N-1$.

\begin{table}[!ht]
\caption{For $\mu_l = 1, \, \lambda_l \in \{0.1,0.3,0.5,0.7,0.9\}, \, \mu_h \in [\mu_l+0.01, \, 4], \, \lambda_h \in [\lambda_l+0.01, \, 3]$, we calculate the relative maximal difference between the optimal system efficiency and the threshold strategy, and present the relative difference, which is $\displaystyle \frac{\max_{\lambda_h\in[\lambda_l+0.01,3], \, \mu_h\in[\mu_l+0.01,4]}\{\max_a S(a)-\max_{a \in \mathcal{T}} S(a)\}}{\max_{\lambda_h\in[\lambda_l+0.01,3], \, \mu_h\in[\mu_l+0.01,4]}\{\max_a S(a)\}}$.} \label{tab:VI}
\vspace*{10pt}
\centering
        {\def\arraystretch{1.5}  %change the number for increasing or decreasing the spacing.
            %\scriptsize %uncomment this for changing the font size
\begin{tabular*}{0.8\textwidth}{@{\extracolsep{\fill}}cccccc}
\hline
\hline
		 \makecell[c]{$\lambda_l$ \\ $\mu_h$ \\ $\lambda_h$} & \makecell[c]{$0.1$ \\ $[1.01, 4]$ \\ $[0.11,3]$} & \makecell[c]{$0.3$ \\ $[1.01, 4]$ \\ $[0.31,3]$} & \makecell[c]{$0.5$ \\ $[1.01, 4]$ \\ $[0.51,3]$} & \makecell[c]{$0.7$ \\ $[1.01, 4]$ \\ $[0.71,3]$} & \makecell[c]{$0.9$ \\ $[1.01, 4]$ \\ $[0.91,3]$} \\
		 \hline
	% $N = 3$ & $3.3642 \, 10^{-4}$  & $0.0015$  &  $0.0014$  & $0.0010$ & $4.8194 \, 10^{-4}$  \\
        $N = 3$ & $0.04\%$  & $ 0.21\%$  &  $0.27\%$  & $0.24\%$ & $0.14\%$  \\
		 \hline
	$N = 4$ & $0.09\%$  & $ 0.21\%$  &  $0.24\%$  & $0.22\%$ & $0.13\%$ \\
\hline
\hline
\end{tabular*}
\label{table:mechanical_property}
}
\end{table}

When there are $N$ customers in the system, there are totally $2^{N(N+1)/2}$ strategies, so the computation is very expensive if $N > 4$. For example, there are more than $3 \times 10^5$ strategies when $N =5$. If $\lambda_h \gg \lambda_l$, then the low rate is chosen in all states in the optimal strategy. If $\mu_h \gg \mu_l$, then the high rate is chosen in all states. Thus, we only look into the case when $\mu_h$ and $\lambda_h$ are not very much greater than $\mu_l$ and $\lambda_l$, respectively, for $N = 3,4$. Denote by $\mathcal{T}$ the set of threshold strategies. For $\mu_l < \mu_h \leq 4$, $\lambda_l < \lambda_h \leq 3$, $N = 3,4$, we compare the best strategy with the best threshold strategy, and present the relative maximal difference of the resulting system efficiency in Table \ref{tab:VI}. It can be seen that the relative difference
%between the optimal efficiency and the best efficiency if we restrict our strategies to $\mathcal{T}$
is always very small (at most $0.27\%$). In other words, {\it if we only focus on threshold strategies, the optimal system efficiency we can obtain will not differ much from that if we consider all the possible pure strategies.} Thus, when $N > 4$, we only consider a threshold strategy which can be denoted by an integer $n$, whose interpretation is that customers adopt the high rate when $i > n$, and take the low rate when $i \leq n$. There are totally $N+1$ threshold strategies to be considered. The calculation of the system efficiency under threshold $n$ is given in Appendix \ref{appendix:QS}.

% \begin{figure}[ht]
% 	\subcaptionbox{$\mu_l = 1, \, \lambda_l = 0.5$}%
% 	{\includegraphics[width=0.33\linewidth]{./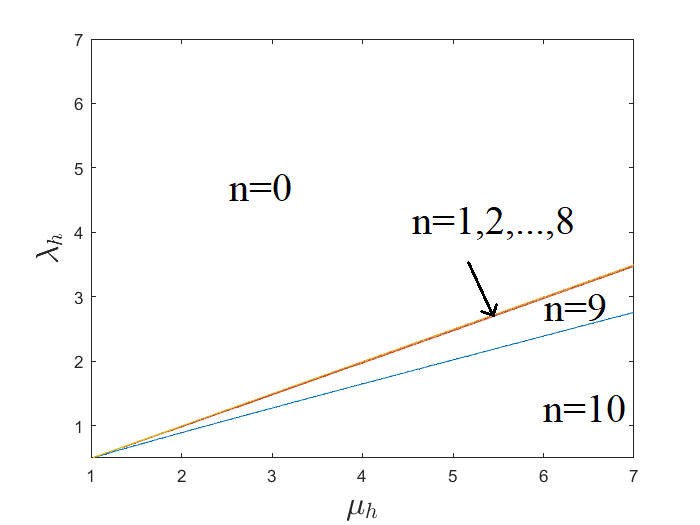}}
% 	\subcaptionbox{$\mu_l = 1, \, \lambda_l = 0.7$}%
% 	{\includegraphics[width=0.329\linewidth]{./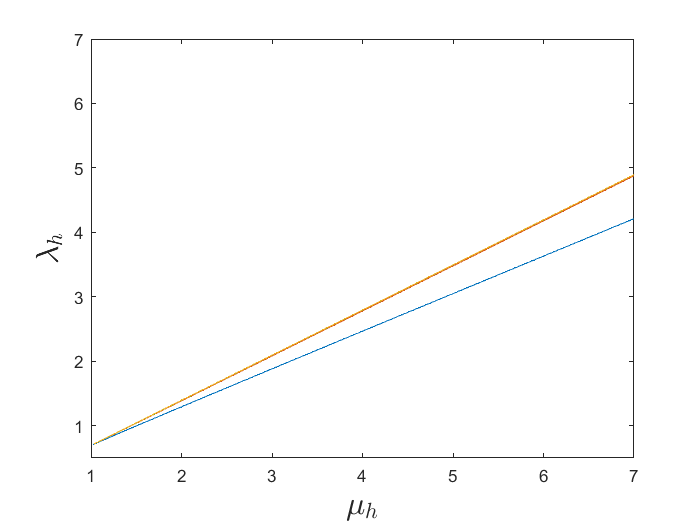}}
% 	\subcaptionbox{$\mu_l = 1, \, \lambda_l = 0.9$}%
% 	{\includegraphics[width=0.329\linewidth]{./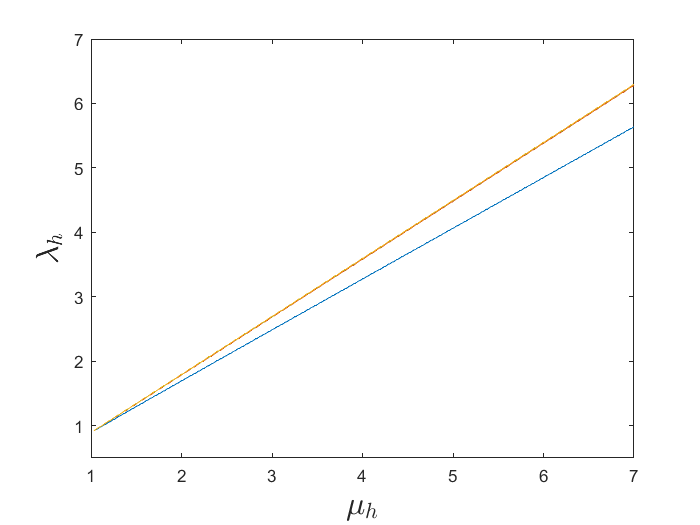}}
% 	\caption{The optimal strategy regions for different parameter settings when $N = 10$.}	\label{fig:OptimalN10}
% \end{figure}

\begin{figure}[ht]


    \centering
    \begin{minipage}{0.33\textwidth}
        \centering
        \includegraphics[width=\linewidth]{./Plots/5N10.png}
        \textbf{(a)} 
        $\mu_l = 1, \, \lambda_l = 0.5$
    \end{minipage}%
    \begin{minipage}{0.33\textwidth}
        \centering
        \includegraphics[width=\linewidth]{./Plots/7N10.png}
        \textbf{(b)} 
        $\mu_l = 1, \, \lambda_l = 0.7$
    \end{minipage}%
    \begin{minipage}{0.33\textwidth}
        \centering
        \includegraphics[width=\linewidth]{./Plots/9N10.png}
        \textbf{(c)} 
        $\mu_l = 1, \, \lambda_l = 0.9$
    \end{minipage}
    \vspace*{10pt}
    \caption{The optimal strategy regions for different parameter settings when $N = 10$.}
    \label{fig:OptimalN10}
\end{figure}

An example of the optimal threhsold strategy for $N = 10$ is depicted in Figure \ref{fig:OptimalN10}. In this example, the region where the optimal strategy is $n = 1,2,3,\ldots,8$ is so small that it looks like a line (colored red), whose {\it slope} is, approximately, $\lambda_l/\mu_l$. This can be compared with Theorem \ref{con:1} that states that for large values of $N$ it is optimal to use the more efficient service type. 
% \begin{table}[H]
% % \makegapedcells
% \centering
% {\footnotesize 
% \begin{tabular}{|c c|cccc|}
% \cline{1-6}
% \multicolumn{2}{|c|}{$\mu_h$} & \multicolumn{4}{|c|}{$[3, 4]$} \\
% \multicolumn{2}{|c|}{$\lambda_h$} & \multicolumn{4}{|c|}{$[\lambda_l+0.01, 1]$} \\
% \Xhline{3\arrayrulewidth}
% \multicolumn{2}{|c|}{$\lambda_l$} & 0.1 & 0.2 & 0.3 & 0.4 \\
% \hline 
% optimal  & $N = 2$ & $0.53\%$  & $1.50\%$  &  $2.21\%$  & $0$   \\
% \multirow{2}{*}{strategy} & $N = 3$ & $1.12\%$ & $3.08\%$ & $4.10\%$  &  $0$   \\
% & $N = 4$ & $1.76\%$ & $4.55\%$ & $5.05\%$  &  $0.55\%$  \\
% \Xhline{3\arrayrulewidth}
% \multicolumn{2}{|c|}{$\lambda_l$} & $0.05$ & $0.08$ & $0.1$ & $0.15$ \\
% \hline
% optimal & $N = 10$ & $1.50\%$ & $4.52\%$ & $5.55 \%$ & $4.18\%$ \\
% {threshold strategy} & $N = 15$ & $3.68\%$ & $5.35\% $ & $2.93\%$ & $0.39\%$ \\
% \hline
% \end{tabular}}
% \caption{
% %\blue{I think this is now what we have in the response to referee 2} 
% {The ratio of the change in efficiency when selecting the more efficient service type at every state to the optimal system efficiency,} where the optimal system efficiency is over every pure strategy for $N = 2,3,4$ or over every threshold strategy for $N = 10,15$. The change is defined as the difference between the system efficiency if the more efficient service at every state is selected and the optimal system efficiency. The maximum overall searched values in the given intervals are presented.}
% %$\displaystyle \max\{S^*- S(n)\}$, where $n = 0$ if $\mu_l/\lambda_l > \mu_h/\lambda_h$, and $n = N$ otherwise.
% \label{tab:VII}
% \end{table}
\begin{table}[!ht]
\caption{The ratio of the change in efficiency when selecting the more efficient service type at every state to the optimal system efficiency, where the optimal system efficiency is over every pure strategy for $N = 2,3,4$ or over every threshold strategy for $N = 10,15$. The change is defined as the difference between the system efficiency if the more efficient service at every state is selected and the optimal system efficiency. The maximum overall searched values in the given intervals are presented.} \label{tab:VII}
\vspace*{10pt}
\centering
        {\def\arraystretch{1.5} 
\begin{tabular*}{0.8\textwidth}{@{\extracolsep{\fill}}c ccccc}
\hline
\hline
\multicolumn{2}{c}{$\mu_h$} & \multicolumn{4}{c}{$[3, 4]$} \\
\multicolumn{2}{c}{$\lambda_h$} & \multicolumn{4}{c}{$[\lambda_l+0.01, 1]$} \\
\Xhline{3\arrayrulewidth}
\multicolumn{2}{c}{$\lambda_l$} & 0.1 & 0.2 & 0.3 & 0.4 \\
\hline 
optimal  & $N = 2$ & $0.53\%$  & $1.50\%$  &  $2.21\%$  & $0$   \\
\multirow{2}{*}{strategy} & $N = 3$ & $1.12\%$ & $3.08\%$ & $4.10\%$  &  $0$   \\
& $N = 4$ & $1.76\%$ & $4.55\%$ & $5.05\%$  &  $0.55\%$  \\
\Xhline{3\arrayrulewidth}
\multicolumn{2}{c}{$\lambda_l$} & $0.05$ & $0.08$ & $0.1$ & $0.15$ \\
\hline
optimal & $N = 10$ & $1.50\%$ & $4.52\%$ & $5.55 \%$ & $4.18\%$ \\
{threshold strategy} & $N = 15$ & $3.68\%$ & $5.35\% $ & $2.93\%$ & $0.39\%$ \\
\hline
\hline
\end{tabular*}
\label{table:mechanical_property}
}
\end{table}

We compare the system efficiency when selecting the more efficient service type at every state, to the optimal (threshold) strategy, in Table \ref{tab:VII}.
When $N =2,3,4$, we use the optimal strategy among all pure strategies, when $N = 10, \, 15$, we use the optimal threshold strategy. We set $\mu_l = 1$ and choose $[3,4]$ and $[\lambda_l+0.01, 1]$ as the range for $\mu_h$ and $\lambda_h$, respectively, since usually the expected service time is much smaller than the active time. For example, an EV that is charged for 30 min can run for hours. We chose the values of  $\lambda_l$ that result in relatively larger ratios. 

The relative difference is not significant, but also not negligible. This is because there exists a parametric region where $n^* = N-1$. Take $N = 10$ for example, we have, from Figure \ref{fig:OptimalN10}, that $n^* = 9$ for some parameters. Strategy $n^* = N-1$ means choosing the slow service when there are no queueing customers (and one in service) is optimal although the fast service is more efficient. Based on the numerical examples we propose the following conjecture:
\begin{conjecture} \label{Conj:1}
When $N$ is large, if the slow service is more efficient then it is optimal to adopt it. If the fast service is more efficient, there are two cases. In the first case, if the efficiency difference is under some bound, then it is optimal to use the slow service when this is the only inactive customer, and use the fast service otherwise. In the second case, if the efficiency difference is greater than this bound, then it is always optimal to use the fast service. This also means that when $N$ is large, with the following strategies, we can approximate the optimal strategy well. 
    \begin{itemize}
        \item[$\circ$] always choose the slow service;
        \item[$\circ$] choose the slow service when there is only one inactive customer and the fast service otherwise;
        \item[$\circ$] always choose the fast service.
    \end{itemize}
\end{conjecture}
It can be observed from Figures \ref{fig:Optimalan2}, \ref{fig:Optimalan3}, and \ref{fig:OptimalN10} that the three threshold strategies almost occupy the whole parameter space.

\section{Nash equilibrium}  \label{sec:NE}

In this section, we are interested in the equilibrium strategy when customers maximize their own activity time. Unlike the case of overall optimization, we cannot exclude the possibility that the resulting strategies use randomization. We further assume that customers apply (mixed) threshold strategies, choosing long service when the queue length exceeds a threshold and perhaps randomizing at the threshold. This assumption simplifies customer strategies and can be justified by the {\it bounded rationality}, i.e., individuals cannot make complex calculations or follow complex strategies. For example, in our case, they cannot infer the service types of active customers from the information they possess.
Under full rationality, the equilibrium would differ, however, it is unrealistic to assume that individuals possess complete information and are capable of fully processing it.

%In this section, we assume that customers act to maximize their own active time. Unlike the above situation where the manager knows the full information, we assume, from bounded rationality, that customers only know the number of inactive customers (the value of $i$) when they make the decision. In this situation, mixed strategy is possible. We also assume that in contrast to the threshold strategies used to optimize the system, here customers take the high rate service when the queue is below some threshold, and take the low rate service when the queue is above some threshold (i.e. a threshold strategy).

In contrast to the threshold strategies used to optimize the system (see \eqref{eq:threshold1}), here a threshold strategy with threshold $x = n + p, \, n \in \mathbb{N},\, p \in [0,1)$, has
\begin{equation*} \label{eq:threshold2}
	a(i,h) =
	\begin{cases}
		1 & i \leq n, \\
		p & i = n+1, \\
		0 & i >  n+1.
	\end{cases} \,
\end{equation*}
In our numerical study, we always found at least one equilibrium of this type.

To work out the symmetric Nash equilibrium strategy, we first arbitrarily select a customer as our tagged customer, and assume that her strategy can be different from other customers' strategies.
% For $h = 0,1,\ldots, N, i = 0,\ldots, N-h, \, k = -1,0, \ldots, i$, let $h$ be the number of active customers in high rate, $i$ be the number of inactive customers, and
Let $k = -1,0, \ldots, i$ represent the state of the tagged customer. When $k > 0$, $k$ is the position of the tagged customer in the queue; When $k = -1,0$, it means that the tagged customer is active at a low and high rate, respectively. Thus, $i$ does not include the tagged customer when $k = -1,0$, and includes her otherwise. The state of the system is represented by $(k,i,h)$, where $i$ and $h$ are the same as before; $i$ is the number of inactive customers, $h$ is the number of customers who are active at a high rate. We assume that the tagged customer uses a pure strategy $m \in \{0,1,\ldots, N\}$, and the other customers use a threshold strategy $x \in [0,N]$. The stationary distribution of $(k,i,h)$ is affected by $m$ and $x$, and is denoted by $\pi^{(m,x)}_{k,i,h}$.

With the stationary distribution, we can calculate the fraction of time that the tagged customer is active as
\[
U(m,x) =\sum_{k = 0,1} \sum_{i, h}\pi^{(m,x)}_{k,i,h} \,.
\]
The calculation of $\pi^{(m,x)}_{k,i,h}$ is given in Appendix \ref{appendix:QN}.

For any fixed value of $x$, we can calculate the best response of the tagged customer
\begin{equation*}
    b(x) = \arg\max_{m} U(m,x) \,.
\end{equation*}
Let $x_e$ be the Nash equilibrium threshold, then $x_e$ shall satisfy $x_e = b(x_e)$, or $x_e$ is between $b(x_e^-)$ and $b(x_e^+)$. In Figure \ref{fig:BR}, we plot the best responses for different $\lambda_h$ when $N = 5, \mu_l = 1, \mu_h = 3, \lambda_l = 0.9$, and line $b(x) = x$. The intersection points are $x_e$. It can be observed that the tagged customer's best-response to a threshold strategy adopted by others is a nondecreasing step function of the threshold. This behavior is called follow the crowd (FTC), and multiple equilibria are
possible when it is FTC. Figures \ref{fig:BR}(c) and 7(d) are examples of multiple equilibria.
% \begin{figure}[h]
% 	\subcaptionbox{$\lambda_h = 1.05, \, x_e = 5.$}%
% 	{\includegraphics[width=0.32\linewidth]{./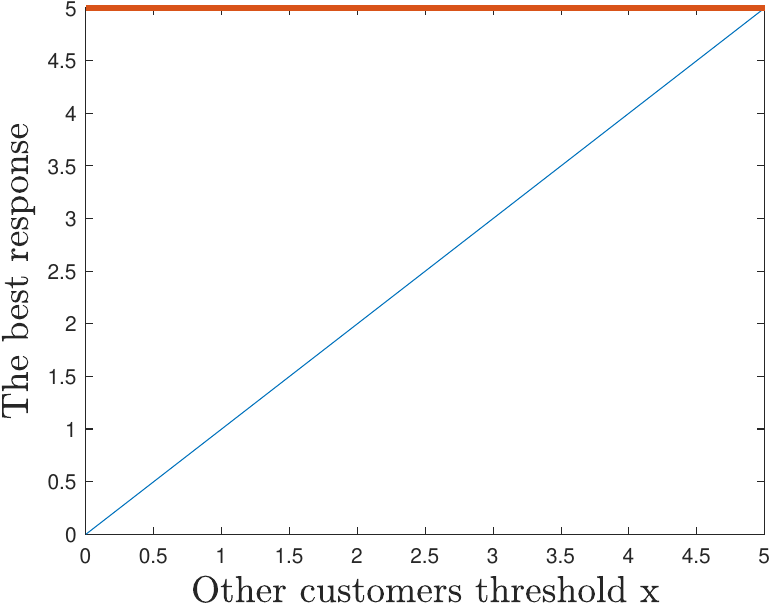}}
% 	\subcaptionbox{$\lambda_h = 1.20, \, x_e = 5.$}%
% 	{\includegraphics[width=0.32\linewidth]{./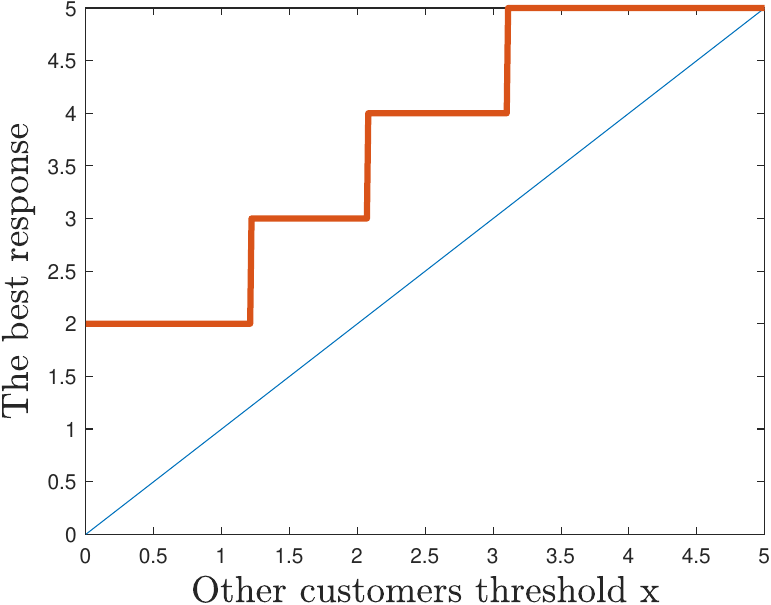}}	
% 	\subcaptionbox{$\lambda_h = 1.44, \, x_e = 0, 0.83, 1,$ \\ $1.74,2,2.45,3,3.41,4.$}%
% 	{\includegraphics[width=0.32\linewidth]{./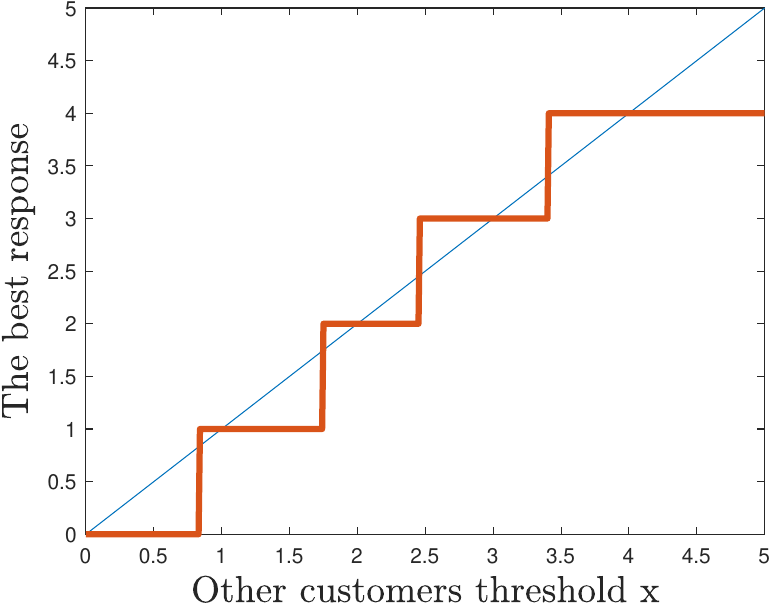}}
% 	\subcaptionbox{$\lambda_h = 1.52, \, x_e = 0,2.88, 3.$}%
% 	{\includegraphics[width=0.32\linewidth]{./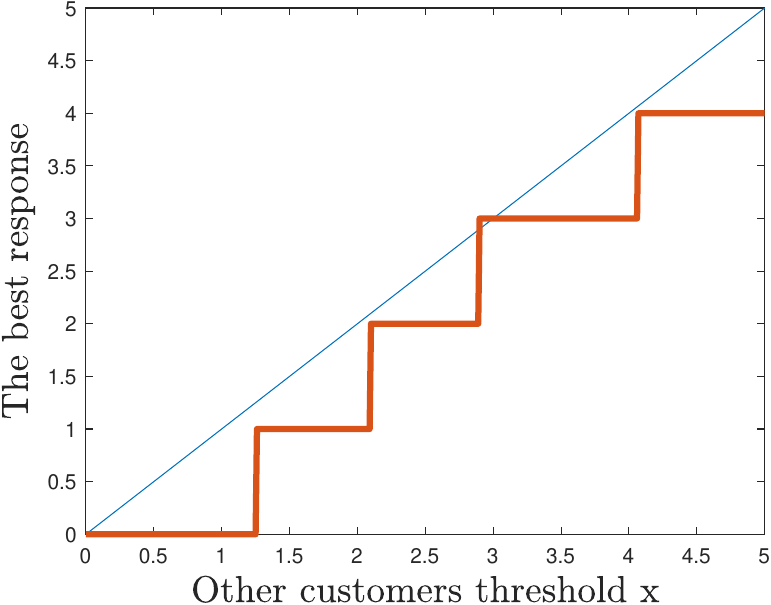}}
% 	\subcaptionbox{$\lambda_h = 1.86, \, x_e = 0.$}%
% 	{\includegraphics[width=0.32\linewidth]{./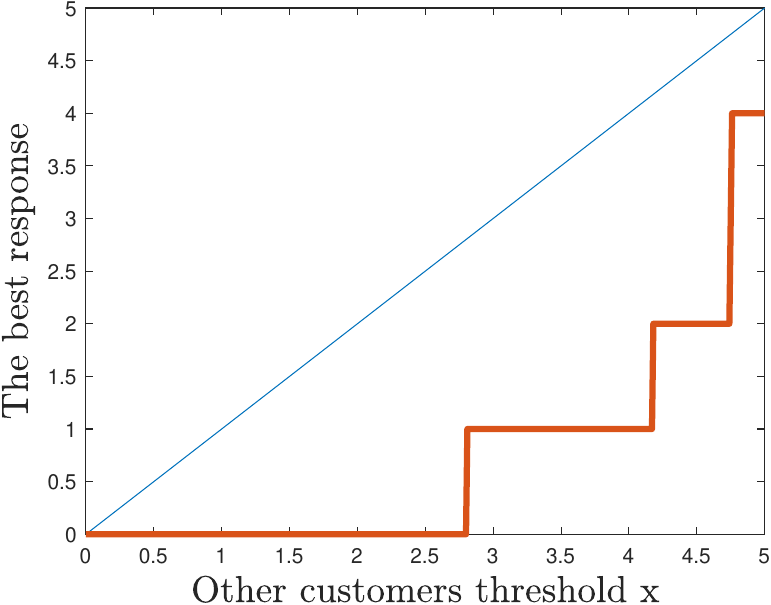}}	
% 	\subcaptionbox{$\lambda_h = 2.10, \, x_e = 0.$}%
% 	{\includegraphics[width=0.32\linewidth]{./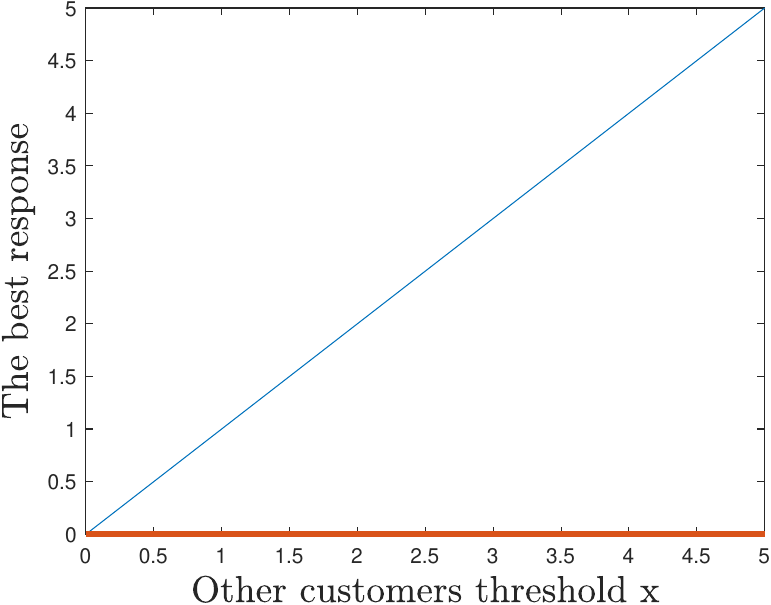}}
% 	\caption{The best response of the tagged customer for different values of other customers' threshold when $N = 5, \mu_l = 1, \mu_h = 3, \lambda_l = 0.9$.}	\label{fig:BR}
% \end{figure}

\begin{figure}[h]


    \centering
    \begin{minipage}{0.32\textwidth}
        \centering
        \includegraphics[width=\linewidth]{./Plots/NE1.pdf}
        \textbf{(a)} \\
        $\lambda_h = 1.05, \, x_e = 5.$
    \end{minipage}%
    \begin{minipage}{0.32\textwidth}
        \centering
        \includegraphics[width=\linewidth]{./Plots/NE2.pdf}
        \textbf{(b)} \\
        $\lambda_h = 1.20, \, x_e = 5.$
    \end{minipage}%
    \begin{minipage}{0.32\textwidth}
        \centering
        \includegraphics[width=\linewidth]{./Plots/NE3.pdf}
        \textbf{(c)} \\
        $\lambda_h = 1.44, \, x_e = 0, 0.83, 1, 1.74, 2, 2.45, 3, 3.41, 4.$
    \end{minipage}

    \vspace{0.5em} % Small vertical spacing between rows
    
    \begin{minipage}{0.32\textwidth}
        \centering
        \includegraphics[width=\linewidth]{./Plots/NE4.pdf}
        \textbf{(d)} \\
        $\lambda_h = 1.52, \, x_e = 0, 2.88, 3.$
    \end{minipage}%
    \begin{minipage}{0.32\textwidth}
        \centering
        \includegraphics[width=\linewidth]{./Plots/NE5.pdf}
        \textbf{(e)} \\
        $\lambda_h = 1.86, \, x_e = 0.$
    \end{minipage}%
    \begin{minipage}{0.32\textwidth}
        \centering
        \includegraphics[width=\linewidth]{./Plots/NE6.pdf}
        \textbf{(f)} \\
        $\lambda_h = 2.10, \, x_e = 0.$
    \end{minipage}
    
    \caption{The best response of the tagged customer for different values of other customers' threshold when $N = 5, \mu_l = 1, \mu_h = 3, \lambda_l = 0.9$.}
    \label{fig:BR}
\end{figure}

There is always a discrepancy between individual behavior and optimal strategy. We now check how inferior social welfare can be under the worst Nash equilibrium. We focus on threshold strategies depending on the number of inactive customers for both the Nash equilibria and the optimal strategy. 
%Let $\mathcal{X}_e$ and $\mathcal{N}_e \subseteq \mathcal{X}_e$ be the set of equilibria and pure equilibria, respectively. 
To compare the efficiency of the system that resulted from the Nash equilibria with the optimal efficiency of the system $S^*$, we compare the efficiency of the system resulted from the Nash equilibria with the optimal efficiency of the system $S^*$. 
% calculate the efficiency for each Nash equilibrium threshold, and 
compute the price of anarchy
\begin{equation*}
    \text{PoA} \equiv \frac{\displaystyle\max_{a \in \mathcal{T}} S(a)}{\text{minimum of system efficiency under the Nash equilibria}} \,.
\end{equation*}
% \begin{equation*}
%     \text{PoA} \equiv \frac{\displaystyle\max_{n \in  \{0,\ldots, N\}} S(n)}{\displaystyle\min_{x_e \in \mathcal{X}_e}S(x_e)} \blue{\approx}
%  \frac{\displaystyle\max_{n \in  \{0,\ldots, N\}} S(n)}{\displaystyle\min_{n_e \in \mathcal{N}_e}S(n_e)} \,.
% \end{equation*}
% The equality follows from the observation that the maximal and minimal system efficiency under equilibria can only be achieved when the strategy is pure.
 
If both $n_e$ and $n_e'$ are equilibria, we also observe {numerically} that the efficiency of the system under $n_e$ is less if $n_e < n_e'$.
% \[
% S(n_e) < S(n_e') \qquad \text{if} \qquad n_e < n_e' \,.
% \]
If the slow service is more efficient, that is, if $\mu_l/\lambda_l > \mu_h/\lambda_h$, then choosing the slow service is a dominant strategy. Therefore, if multiple Nash equilibria are observed, the fast service must be more efficient.

\begin{theorem} \label{thm:PoA}
  For fixed values of the parameters $\lambda_h,\lambda_l,\mu_h$ and $\mu_l$, when $N \rightarrow \infty$, the PoA may be arbitrarily large.
\end{theorem}

\proof{Proof of Theorem \ref{thm:PoA}.}
Suppose that $\mu_h = 1$, and $\mu_l = 1/K, \, K > 1$.  As $N \rightarrow \infty$, customers prefer the slow service because the expected waiting in the queue tends to $\infty$, and customers want to be active for a longer time. Thus, selecting the low rate is a dominant strategy.

The ratio of the efficiency of the high rate to the low rate is $K \, \lambda_l/\lambda_h$. If $K>\lambda_h/\lambda_l$, the high rate is more efficient. It follows from Theorem \ref{con:1} that the high-rate service is optimal. In this case, PoA$ = K \lambda_l/\lambda_h$. If we fix $\lambda_l/\lambda_h$, then PoA can be arbitrarily large by tuning the value of $K$. 

Examples of system efficiency in Nash equilibrium, under the optimal threshold strategy, and by only using the more efficient service, are presented in Table \ref{tab:poa}. It can be seen that when $N=30$, the optimal threshold policy is $n^* = N$ and the value of PoA is close to $K\lambda_h/\lambda_l$ as described in Theorem \ref{thm:PoA}. When $N = 10$, we picked parameters such that the optimal threshold policy is $n^* = N-1$, so PoA$\ne 1$ after regulation, but we can still decrease PoA by only using the more efficient service.
Note that if the slow service is more efficient, individuals’ optimal choice is to select the slow service. Furthermore, Conjecture~\ref{Conj:1} suggests that when \( N \) is large, it is optimal to adopt the slow service if it is more efficient. Numerical examples in Figure~\ref{fig:OptimalN10} indicate that this already holds when \( N = 10 \). Therefore, in Table~\ref{tab:poa}, the PoA is equal to 1 when slow service is more efficient.

\begin{table}[!ht]
\caption{The worst system efficiency under Nash equilibria, the optimal system efficiency under threshold strategies, and the value of PoA (original and after regulation) when $\mu_l = 1, \lambda_l = 0.2$.} \label{tab:poa}
\vspace*{10pt}
\centering
        {\def\arraystretch{1.5} 
{\footnotesize \begin{tabular*}{1\textwidth}{@{\extracolsep{\fill}}ccccccc}
\hline
\hline
 $N$ & $30$ & $30$ & $30$ & $30$ & $10$ & $10$ \\
    \hline
    $\mu_h$ & $2$ & $3$ & $4$ & $5$ & $2$ & $5$ \\
    \hline
    $\lambda_h$ & $0.21$ & $0.21$ & $0.21$ & $0.21$ & $0.35$ & $0.9$ \\
    \Xhline{3\arrayrulewidth}
    % $n_e$ & 0 & 0 & 0 & 0 & 0 & 0 \\
    % \hline
    worst system efficiency under $n_e$ & $0.1667$ & $0.1667$ & $0.1667$ & $0.1667$ & $0.4908$ & $0.4908$\\
    \hline
    % $n^*$ & $N$ & $N$ & $N$ & $N$ & $N-1$ & $N-1$  \\
    % \hline
    $\max_{a \in \mathcal{T}} S(a)$ & $0.3175$ & $ 0.4761$ & $0.6317$ & $0.7635$ & $0.5541$ & $0.5466$  \\
    \hline
    system efficiency by only using the more efficient service & $0.3175$  & $ 0.4761$ & $0.6317$  &  $0.7635$  &  $0.5515$  & $0.5385$ \\
    \hline
    PoA & $1.9046$ & $2.8560$ & $3.7894$ & $4.5801$ & $1.1290$ & $1.1096$ \\
    \hline
    PoA after regulation by only using the more efficient service & 1 & 1 & 1 & 1 & $1.0047$ & $1.0113$ \\
\hline
\hline
\end{tabular*}}
\label{table:mechanical_property}
}
\end{table}

%We propose a regulation that removes the less efficient service type. That is, remove the high rate service if $\mu_h/\lambda_h < \mu_l/\lambda_l$, or the low rate service otherwise. The resulting system efficiency is close to socially optimal, as shown in Table \ref{tab:poa}. In Table \ref{tab:poa} for the five values of $\lambda_h$, the high rate service is more efficient and the socially-optimal threshold strategy is always adopting the high rate. So when the high-rate service is the only available service, PoA = 1.
%\blue{but we know this is not the general; rule, so maybe we need to show (also?) a different example where PoA is less than 1}.

\section{Concluding remarks}
\label{sec:FR}

This paper investigates efficiency optimization in a closed queueing network with two service types. 
The queueing mechanism adds an extra cost when the customer demands a new service, and the interactions among the customers emerge from the need to share a common queue.
We show that choosing the more efficient service is not always optimal, but the optimal strategy can be approximated well by selecting one of three threshold strategies which depend on the number
of inactive customers.
%We show that  choosing the more efficient service type at every state is not always optimal, but in most cases it is a good approximation.  
We also show that if customers make their choice in a selfish way, the system may be extremely inefficient.

A natural extension of this work would be to generalize the framework to accommodate multiple service types. Specifically, we could consider a system with three service rate categories: \textbf{fast}, \textbf{medium}, and \textbf{slow}, characterized by parameters $\lambda_l$, $\lambda_m$, $\lambda_h$, $\mu_l$, $\mu_m$, and $\mu_h$, where $\lambda_l < \lambda_m < \lambda_h$ and $\mu_l < \mu_m < \mu_h$.
Consider two individuals, each selecting one service type. The state of the system is characterized by the tuple $(i,j,k)$, where $i$ denotes the number of inactive customers, $j$ represents the number of active customers in fast service, $k$ indicates the number of active customers in medium service, and $N-i-j-k$ active customers in slow service.
The strategy space can be represented as:
\[
a(1,0,0)\,a(1,0,1)\,a(1,1,0) \mid a(2,0,0),
\]
where each $a(i,j,k) \in \{h,\,m,\,l \}$, represents a service rate choice, yielding $3^4 = 81$ possible strategies. 
Our numerical experiments with the parameters $\lambda_l = 0.9$, $\lambda_m = 1$, $\lambda_h = 2$, $\mu_l = 1$, $\mu_m = 5$, and $\mu_h$ increases from $8$ to $18$, show the evolution of the optimal strategy $mmm|h \rightarrow lll|h$. Across all tested parameter configurations, threshold-type strategies consistently emerge as optimal, with higher-rate services being adopted when more customers are inactive. This behavior is consistent with the two-type case. When $N > 2$, a similar analysis can be applied.
% {\bf RH: do we need the last sentence?} Note that threshold strategies may skip the medium service type. For example, when $N = 2$, the threshold strategy $lll|h$ can be optimal.

Regardless of the number of service types, when faster services are more efficient, choosing a slower service may still be individually optimal if others also select the slower service, as it allows one to avoid spending time waiting in a queue. Consequently, removing the slow service option can still be used to decrease the PoA.

Other extensions include heterogeneous customers, non-exponential service and activity duration, multiple service modes, or even a continuous interval of service types, and allowing customers to return to service before their active time expires. For example, an electric vehicle driver can decide to go to the charging station when there is 20\% battery and decide to leave before it is fully charged. 

We demonstrate how service efficiency affects periodic decisions when the cost in each period is endogenous and affected by the service type selection. Thus, we adopted only two types of services. The matrix analytic method (MAM) used in our work efficiently calculates the stationary distribution of two-dimensional Markov chains. It can also be used in three-dimensional Markov chains, as we did in Section 4. When there are more than two types of services, as long as the number is fixed, theoretically MAM can be adopted to solve it. For example, algorithms in \cite{DDKD20}, which deal with multidimensional level-dependent Markov chains, can be used to calculate the stationary distribution.

In this paper, we follow the common assumption of linear waiting costs.  Therefore, the time already spent waiting is considered a sunk cost. In real-life situations, the waiting cost function may be nonlinear, and after a long waiting period, the needs of the customers may become more urgent and affect the customer's choice. Adding this type of assumption will complicate the solution, which may also depend on the specific choice of the function. We leave this extension to future research.

\ACKNOWLEDGMENT{This research is supported by the Israel Science Foundation (Grants no. 1571/19 and 852/22).}

%%REFERENCES%%
%%%%%%%%%%%%%%%%%%%%%%%%%%%%%%%%%%%%%%%%%%%%%%%%%%%%%%%%%%%%%%%%%%%%%%%%%%%%%%%%%%%%%%%%%%%%%%%%%%%%%%%%%%%%%%%%%%%%%%%%%%%%%%%%%%%%
%% This template complies references using bibtex. You will need to use pomsref.bst file for biblography style.
%REFERENCES USING BIBTEX FILES
%%%%%%%%%%%%%%%%%%%%%%%%%%%%%%%%%%%%%%%%%%%%%%%%%%%%%%%%%%%%%%%%%%%%%%%%%%%%%%%%%%%%%%%%%%%%%%%%%%%%%%%%%%%%%%%%%%%%%%%%%%%%%%%%%%%%

\bibliographystyle{pomsref}

 \let\oldbibliography\thebibliography
 \renewcommand{\thebibliography}[1]{%
    \oldbibliography{#1}%
    \baselineskip14pt %Change this for line spacing within the same reference
    \setlength{\itemsep}{10pt}% %Change this for spacing between two referneces
 }
\ECSwitch % Comment this line out if you do not need e-companion.
% %%%%%%%%%%%%%%%%%%%%%%%%%%%%%%%%%%%%%%%%%%%%%%%%%%%%%%%%%%

% %%% Main head for the e-companion
\ECHead{APPENDICES}

\section{Stationary distributions when $N = 2$}
\label{appendix:1}

For strategy $0{*}|0$,
\begin{align*}
    & \pi_{0,0} = \frac{\mu _l^2}{2 \lambda _l \mu _l+2 \lambda _l^2+\mu _l^2}\,, \qquad  \pi_{0,1} = 0\,, \qquad \pi_{0,2} = 0\,, \\
    & \pi_{1,0} = \frac{2 \lambda _l \mu _l}{2 \lambda _l \mu _l+2 \lambda _l^2+\mu _l^2} \,, \qquad \pi_{1,1} = 0 \,, \qquad \pi_{2,0} = \frac{2 \lambda _l^2}{2 \lambda _l \mu _l+2 \lambda _l^2+\mu _l^2} \,. \notag
\end{align*}
For strategy ${*}1|1$,
\begin{align*}
    & \pi_{0,0} = 0 \,, \qquad \pi_{0,1} = 0 \,, \qquad \pi_{0,2} = \frac{\mu _h^2}{2 \lambda _h \mu _h+2 \lambda _h^2+\mu _h^2} \,, \\
    & \pi_{1,0} = 0 \,, \qquad \pi_{1,1} = \frac{2 \lambda _h \mu _h}{2 \lambda _h \mu _h+2 \lambda _h^2+\mu _h^2} \,, \qquad \pi_{2,0} = \frac{2 \lambda _h^2}{2 \lambda _h \mu _h+2 \lambda _h^2+\mu _h^2} \,. \notag
\end{align*}
For strategy $00|1$,
\begin{align*}
    & \pi_{0,0} = \frac{\lambda _h \mu _h \mu _l^2}{2 \lambda _l^2 \left(\lambda _h+\mu _h\right) \left(\lambda _h+\lambda _l\right)+2 \lambda _l \mu _l \left(\mu _h \lambda _l+\lambda _h \left(\mu _h+\lambda _l\right)\right)+\lambda _h \mu _h \mu _l^2} \,, \\
    & \pi_{0,1} = \frac{2 \mu _h \lambda _l^2 \mu _l}{2 \lambda _l^2 \left(\lambda _h+\mu _h\right) \left(\lambda _h+\lambda _l\right)+2 \lambda _l \mu _l \left(\mu _h \lambda _l+\lambda _h \left(\mu _h+\lambda _l\right)\right)+\lambda _h \mu _h \mu _l^2} \,, \notag \\
    & \pi_{0,2} = 0 \,, \notag\\
    & \pi_{1,0} = \frac{2 \lambda _h \mu _h \lambda _l \mu _l}{2 \lambda _l^2 \left(\lambda _h+\mu _h\right) \left(\lambda _h+\lambda _l\right)+2 \lambda _l \mu _l \left(\mu _h \lambda _l+\lambda _h \left(\mu _h+\lambda _l\right)\right)+\lambda _h \mu _h \mu _l^2} \,, \notag \\
    & \pi_{1,1} = \frac{2 \mu _h \lambda _l^2 \left(\lambda _h+\lambda _l\right)}{2 \lambda _l^2 \left(\lambda _h+\mu _h\right) \left(\lambda _h+\lambda _l\right)+2 \lambda _l \mu _l \left(\mu _h \lambda _l+\lambda _h \left(\mu _h+\lambda _l\right)\right)+\lambda _h \mu _h \mu _l^2} \,, \notag \\
    & \pi_{2,0} = \frac{2 \lambda _h \lambda _l^2 \left(\lambda _h+\lambda _l+\mu _l\right)}{2 \lambda _l^2 \left(\lambda _h+\mu _h\right) \left(\lambda _h+\lambda _l\right)+2 \lambda _l \mu _l \left(\mu _h \lambda _l+\lambda _h \left(\mu _h+\lambda _l\right)\right)+\lambda _h \mu _h \mu _l^2} \,. \notag
\end{align*}
For strategy $11|0$,
\begin{align*}
    & \pi_{0,0} = 0\,, \\
    & \pi_{0,1} = \frac{2 \lambda _h^2 \mu _h \mu _l}{2 \lambda _h^3 \left(\lambda _l+\mu _l\right)+2 \lambda _h^2 \left(\lambda _l+\mu _l\right) \left(\mu _h+\lambda _l\right)+2 \lambda _h \mu _h \lambda _l \mu _l+\mu _h^2 \lambda _l \mu _l} \,, \notag \\
    & \pi_{0,2} = \frac{\mu _h^2 \lambda _l \mu _l}{2 \lambda _h^3 \left(\lambda _l+\mu _l\right)+2 \lambda _h^2 \left(\lambda _l+\mu _l\right) \left(\mu _h+\lambda _l\right)+2 \lambda _h \mu _h \lambda _l \mu _l+\mu _h^2 \lambda _l \mu _l} \,, \notag\\
    & \pi_{1,0} = \frac{2 \lambda _h^2 \mu _l \left(\lambda _h+\lambda _l\right)}{2 \lambda _h^3 \left(\lambda _l+\mu _l\right)+2 \lambda _h^2 \left(\lambda _l+\mu _l\right) \left(\mu _h+\lambda _l\right)+2 \lambda _h \mu _h \lambda _l \mu _l+\mu _h^2 \lambda _l \mu _l} \,, \notag \\
    & \pi_{1,1} = \frac{2 \lambda _h \mu _h \lambda _l \mu _l}{2 \lambda _h^3 \left(\lambda _l+\mu _l\right)+2 \lambda _h^2 \left(\lambda _l+\mu _l\right) \left(\mu _h+\lambda _l\right)+2 \lambda _h \mu _h \lambda _l \mu _l+\mu _h^2 \lambda _l \mu _l} \,, \notag \\
    & \pi_{2,0} = \frac{2 \lambda _h^2 \lambda _l \left(\lambda _h+\mu _h+\lambda _l\right)}{2 \lambda _h^3 \left(\lambda _l+\mu _l\right)+2 \lambda _h^2 \left(\lambda _l+\mu _l\right) \left(\mu _h+\lambda _l\right)+2 \lambda _h \mu _h \lambda _l \mu _l+\mu _h^2 \lambda _l \mu _l} \,. \notag
\end{align*}
For strategy $10|0$,
\begin{align*}
    & \pi_{0,0} = 0 \,, \qquad \pi_{0,1} = \frac{\mu _h \mu _l \left(\lambda _h+\mu _l\right)}{\left(\lambda _l+\mu _l\right) \left(\lambda _h \left(\lambda _h+\mu _h+\lambda _l\right)+\mu _l \left(\lambda _h+\mu _h\right)\right)} \,,\\
    & \pi_{0,2} = 0 \,, \qquad \pi_{1,0} = \frac{\lambda _h \mu _l \left(\lambda _h+\lambda _l+\mu _l\right)}{\left(\lambda _l+\mu _l\right) \left(\lambda _h \left(\lambda _h+\mu _h+\lambda _l\right)+\mu _l \left(\lambda _h+\mu _h\right)\right)} \,, \notag\\
    & \pi_{1,1} = \frac{\mu _h \lambda _l \mu _l}{\left(\lambda _l+\mu _l\right) \left(\lambda _h \left(\lambda _h+\mu _h+\lambda _l\right)+\mu _l \left(\lambda _h+\mu _h\right)\right)} \,, \notag\\
    & \pi_{2,0} = \frac{\lambda _h \lambda _l \left(\lambda _h+\mu _h+\lambda _l+\mu _l\right)}{\left(\lambda _l+\mu _l\right) \left(\lambda _h \left(\lambda _h+\mu _h+\lambda _l\right)+\mu _l \left(\lambda _h+\mu _h\right)\right)} \,. \notag
\end{align*}
For strategy $10|1$,
\begin{align*}
    & \pi_{0,0} = 0 \,, \qquad \pi_{0,1} = \frac{\mu _h \mu _l \left(\mu _h+\lambda _l\right)}{\left(\lambda _h+\mu _h\right) \left(\left(\lambda _l+\mu _l\right) \left(\mu _h+\lambda _l\right)+\lambda _h \lambda _l\right)} \,,\\
    & \pi_{0,2} = 0 \,, \qquad \pi_{1,0} = \frac{\lambda _h \mu _h \mu _l}{\left(\lambda _h+\mu _h\right) \left(\left(\lambda _l+\mu _l\right) \left(\mu _h+\lambda _l\right)+\lambda _h \lambda _l\right)} \,, \notag\\
    & \pi_{1,1} = \frac{\mu _h \lambda _l \left(\lambda _h+\mu _h+\lambda _l\right)}{\left(\lambda _h+\mu _h\right) \left(\left(\lambda _l+\mu _l\right) \left(\mu _h+\lambda _l\right)+\lambda _h \lambda _l\right)} \,, \notag\\
    & \pi_{2,0} = \frac{\lambda _h \lambda _l \left(\lambda _h+\mu _h+\lambda _l+\mu _l\right)}{\left(\lambda _h+\mu _h\right) \left(\left(\lambda _l+\mu _l\right) \left(\mu _h+\lambda _l\right)+\lambda _h \lambda _l\right)} \,. \notag
\end{align*}

\section{Proof of {strategy $10|1$  is never optimal}} \label{appendix:101}

    The proofs for $11|0, 10|0$ and $10|1$ are similar. \underline{To prove that $10|1$ is never optimal}, we calculate
{\small \begin{align*} %\label{eq:1}
	&S(10|1)-S(00|1) = \\
	& \frac{\mu _h \lambda _h \mu _l \, \delta_1}{\left(\lambda _h+\mu _h\right) \left(2 \lambda _l^2 \left(\lambda _h+\mu _h\right) \left(\lambda _h+\lambda _l\right)+2 \lambda _l \mu _l \left(\mu _h \lambda _l+\lambda _h \left(\mu _h+\lambda _l\right)\right)+\lambda _h \mu _h \mu _l^2\right) \left(\left(\lambda _l+\mu _l\right) \left(\mu _h+\lambda _l\right)+\lambda _h \lambda _l\right)} \,, \notag
\end{align*}}
where
{\small \begin{align*}
	\delta_1 = -2 \lambda _h^2 \lambda _l \mu _l +\lambda _h \left(2 \mu _h \lambda _l^2-3 \mu _h \lambda _l \mu _l-\mu _l^2 \left(\mu _h+2 \lambda _l\right)-2 \lambda _l^2 \mu _l\right) + 2 \mu _h \lambda _l^3+2 \mu _h^2 \lambda _l^2+3 \mu _h \lambda _l^2 \mu _l+\mu _h^2 \lambda _l \mu _l\,,
\end{align*}
}and
\begin{align*} % \label{eq:2}
	S(10|1)-S({*}1|1) =\frac{\mu _h \, \delta_2}{\left(\lambda _h+\mu _h\right) \left(2 \lambda _h \mu _h+2 \lambda _h^2+\mu _h^2\right) \left(\left(\lambda _l+\mu _l\right) \left(\mu _h+\lambda _l\right)+\lambda _h \lambda _l\right)}
\end{align*}
where
\begin{equation*}
	\delta_2 = 2 \lambda _h^3 \mu _l+\lambda _h^2 \left(4 \mu _h \mu _l -2 \mu _h \lambda _l +2 \lambda _l \mu _l\right)-\lambda _h \left(2 \mu _h \lambda _l^2+3 \mu _h^2 \lambda _l-\mu _h^2 \mu _l\right) - \mu _h^3 \lambda _l-\mu _h^2 \lambda _l^2\,.
\end{equation*}
The signs of $S(10|1)-S(00|1)$ and $S(10|1)-S({*}1|1)$ are decided by $\delta_1$ and $\delta_2$, respectively. We treat $\delta_1$ and $\delta_2$ as the polynomial of $\lambda_h$ and arrange them in the descending order. It can be seen that for any values of $\lambda_l,\mu_h,\mu_l$, $ 2 \mu _h \lambda _l^3+2 \mu _h^2 \lambda _l^2+3 \mu _h \lambda _l^2 \mu _l+\mu _h^2 \lambda _l \mu _l > 0$ and $S(10|1)-S(00|1) =0$ when $\lambda_h = 0$. The value of $\delta_1$ is  increasing in $\lambda_h$ first and then decreasing in $\lambda_h$ to $-\infty$. Thus, except for $\lambda_h = 0$, there exists a unique $\tilde{\lambda}_h  > 0$ such that $S(10|1)-S(00|1) > 0$ when $\lambda_h < \tilde{\lambda}_h$, and $S(10|1)-S(00|1) < 0$ when $\lambda_h > \tilde{\lambda}_h$. So there is a unique value $\tilde{\lambda}_h$ that makes $S(10|1) = S(00|1)$. Similarly, $S(10|1) < S(*1|1)$ when $\lambda_h = 0$. As $\lambda_h$ increases, $\delta_2$ is either increasing from a negative value to $\infty$, or decreasing first, from a negative value, and then increasing to $\infty$. In either case, there exists a unique value of $\lambda_h \geq 0$ such that $S(10|1)=S({*}1|1)$.

% There is an explanation for $S(10|1)-S(00|1) = 0$ when $r = 0$. If $r=0$, then $\lambda_h = 0$, that means once chooses the service with the high rate, the customer stays active forever. Since when there is a customer active in high rate, the customer in service will choose low rate for both strategy $101$ and $00|1$. Thus, in the long run, there is only one customer taking the low rate service, staying active for an exponential time with rate $1/\lambda_l$, and then returning to the service. So $S(10|1)=S(00|1) =1+\left(1/\lambda_l\right)/\left(1/\lambda_l+1/\mu_l\right)$.

\begin{figure}[h]
\centering
	\includegraphics[width=0.6\linewidth]{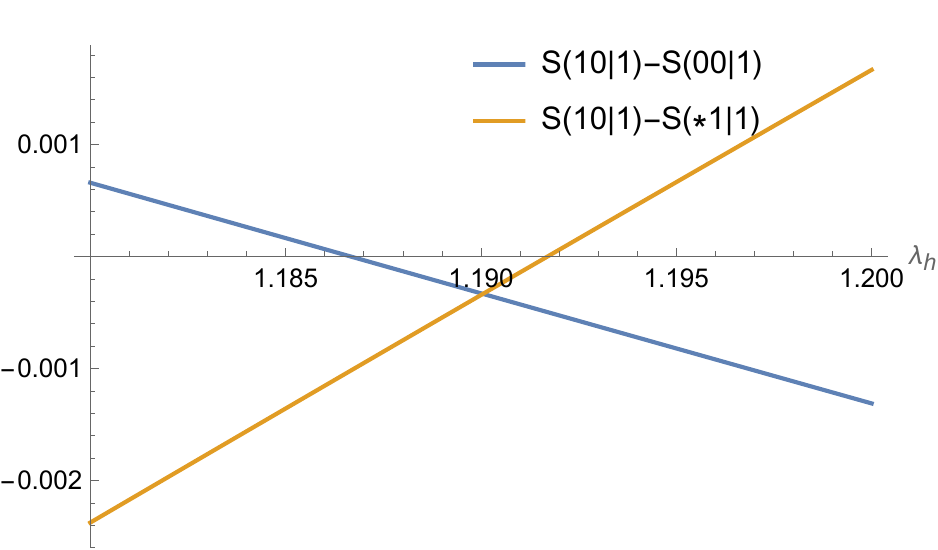}
	\caption{$\mu _h = 1.8,\,\lambda_l= 0.7$.}	\label{fig:delta1}
\end{figure}
The value of $\lambda_h$ that makes $S(10|1) = S(00|1)$ is
{\scriptsize \begin{align*}
	& \tilde{\lambda}_h = \\
	& \frac{2 \lambda _l^2 \left(\mu _h-\mu _l\right)-\lambda _l \mu _l \left(3 \mu _h+2 \mu _l\right)-\mu _h \mu _l^2+\sqrt{8 \mu _h \lambda _l^2 \mu _l \left(2 \lambda _l \left(\mu _h+\lambda _l\right)+\mu _l \left(\mu _h+3 \lambda _l\right)\right)+\left(2 \lambda _l^2 \left(\mu _l-\mu _h\right)+\lambda _l \mu _l \left(3 \mu _h+2 \mu _l\right)+\mu _h \mu _l^2\right){}^2}}{4 \lambda _l \mu _l} \notag \,.
\end{align*}}Next we compare $S(*1|1)$ and $S(00|1)$ when $\lambda_h = \tilde{\lambda}_h$. We have
{\footnotesize
\begin{align*}
	S({*}1|1)-S(00|1) = \frac{2 \mu _h \, \delta_3}{\left(2 \lambda _h \mu _h+2 \lambda _h^2+\mu _h^2\right) \left(2 \lambda _l^2 \left(\lambda _h+\mu _h\right) \left(\lambda _h+\lambda _l\right)+2 \lambda _l \mu _l \left(\mu _h \lambda _l+\lambda _h \left(\mu _h+\lambda _l\right)\right)+\lambda _h \mu _h \mu _l^2\right)} \,,
\end{align*}
}where
\begin{equation*}
	\delta_3 =  \mu _h \lambda _l^2 \left(2 \lambda _h+\mu _h\right) \left(\lambda _h+\lambda _l\right)+\lambda _h \lambda _l \mu _l \left(\mu _h^2-2 \lambda _h \left(\lambda _h+\lambda _l\right)\right)-\lambda _h^2 \mu _l^2 \left(2 \lambda _h+\mu _h\right) \,.
\end{equation*}
The sign of $S({*}1|1)-S(00|1)$ is decided by $\delta_3$. If we substitute $\tilde{\lambda}_h$ into $\delta_3$, we have
\begin{align*}
	\delta_3 \mid_{\lambda_h = \tilde{\lambda}_h}= \frac{1}{8 \lambda _l^3} \, \left(\mathcal{E}_1-\mathcal{E}_2\right)  \label{eq:del1}
\end{align*}
where
\begin{align}
	\mathcal{E}_1 =
	& \mu _l^5 \left(12 \mu _h \lambda _l^2+6 \mu _h^2 \lambda _l+\mu _h^3+8 \lambda _l^3\right) + \mu _l^4 \left(68 \mu _h \lambda _l^3+44 \mu _h^2 \lambda _l^2+9 \mu _h^3 \lambda _l+32 \lambda _l^4\right)  \notag \\
	& +\mu _l^3 \left(140 \mu _h \lambda _l^4+120 \mu _h^2 \lambda _l^3+30 \mu _h^3 \lambda _l^2+40 \lambda _l^5\right) + \mu _l^2 \left(116 \mu _h \lambda _l^5+132 \mu _h^2 \lambda _l^4+39 \mu _h^3 \lambda _l^3+16 \lambda _l^6\right) \notag \\
	& +\mu _l \left(24 \mu _h \lambda _l^6+38 \mu _h^2 \lambda _l^5+11 \mu _h^3 \lambda _l^4\right) +  8 \mu _h^2 \lambda _l^6+6 \mu _h^3 \lambda _l^5 \,, \notag \\
	\mathcal{E}_2 = & \mu _h \lambda _l^3 \left(\mu _h+4 \lambda _l\right)+2 \lambda _l^2 \mu _l \left(11 \mu _h \lambda _l+5 \mu _h^2+4 \lambda _l^2\right)+6 \lambda _l \mu _l^2 \left(\mu _h+\lambda _l\right) \left(\mu _h+2 \lambda _l\right)+\mu _l^3 \left(\mu _h+2 \lambda _l\right){}^2 \notag \\
	&\sqrt{8 \mu _h \lambda _l^2 \mu _l \left(2 \lambda _l \left(\mu _h+\lambda _l\right)+\mu _l \left(\mu _h+3 \lambda _l\right)\right)+\left(2 \lambda _l^2 \left(\mu _l-\mu _h\right)+\lambda _l \mu _l \left(3 \mu _h+2 \mu _l\right)+\mu _h \mu _l^2\right){}^2} \notag\,.
\end{align}
We can show that
\begin{align}
	\left(\mathcal{E}_1\right)^2 - \left(\mathcal{E}_2\right)^2= 16 \mu _h^2 \lambda _l^7 \left(\mu _h-\mu _l\right){}^2 \left(2 \lambda _l+\mu _l\right) \left(\mu _h+2 \lambda _l\right) \left(\lambda _l^2 \left(\mu _h+2 \mu _l\right)+\lambda _l \mu _l \left(\mu _h+4 \mu _l\right)+\mu _h \mu _l^2\right) > 0 \notag \,.
\end{align}
Since $\mathcal{E}_1, \mathcal{E}_2 > 0$, this means $\mathcal{E}_1 > \mathcal{E}_2$. So $S(*1|1) > S(00|1)$ when $\lambda_h = \tilde{\lambda}_h$. That is, when $S(10|1) = S(00|1)$, $S({*}1|1) > S(00|1)$. Combining the properties of $S(10|1) - S(00|1)$ and $S(10|1) - S({*}1|1)$ we discussed earlier, for any fixed $\lambda_l,\mu_h,\mu_l$, as $\lambda_h$ increases, $S(10|1) - S(00|1)$ and $S(10|1) - S({*}1|1)$ intersect when are both negative. This is depicted in Figure \ref{fig:delta1}. Thus, for any fixed $\lambda_l,\mu_h,\mu_l$, if we alter the value of $\lambda_h$,
\begin{itemize}
	\item when $S(10|1) > S(00|1)$, $S({*}1|1) > S(10|1)$,
	\item when $S(10|1) > S({*}1|1)$, $S(00|1) > S(10|1)$. 
\end{itemize}

\section{Proof of {strategy 100 is never optimal}.}\label{appendix:100}
    \underline{To prove that $10|0$ is never optimal}, we calculate
	\begin{equation*} \label{eq:3}
		S(10|0)-S(0{*}|0) =\frac{\mu _l \, \delta_1}{\left(\lambda _l+\mu _l\right) \left(2 \lambda _l \mu _l+2 \lambda _l^2+\mu _l^2\right) \left(\lambda _h \left(\lambda _h+\mu _h+\lambda _l\right)+\mu _l \left(\lambda _h+\mu _h\right)\right)} \,,
	\end{equation*}
where
\begin{equation*}
	\delta_1 = -\lambda _h^2 \mu _l \left(2 \lambda _l+\mu _l\right) + \lambda _h \left(2 \lambda _l^2 \left(\mu _h-\mu _l\right)-3 \lambda _l \mu _l^2-\mu _l^3\right)+\mu _h \lambda _l \left(4 \lambda _l \mu _l+2 \lambda _l^2+\mu _l^2\right) \,,
\end{equation*}
and
\begin{equation*} \label{eq:4}
	S(10|0)-S(10|1) = \frac{\lambda _h \lambda_l \, \delta_2}{\left(\lambda _h+\mu _h\right) \left(\lambda _l+\mu _l\right) \left(\lambda _h \left(\lambda _h+\mu _h+\lambda _l\right)+\mu _l \left(\lambda _h+\mu _h\right)\right) \left(\left(\lambda _l+\mu _l\right) \left(\mu _h+\lambda _l\right)+\lambda _h \lambda _l\right)} \,,
\end{equation*}
where
\begin{align*}
    \delta_2 = & \, \lambda _h^3 \mu _l +\lambda _h^2 \left(\mu _l \left(\mu _h+\lambda _l\right)-\mu _h \lambda _l+\mu _h \mu _l+\lambda _l \mu _l+2 \mu _l^2\right) \notag\\
    & + \lambda _h \left(\lambda _l \mu _l \left(\mu _h+\lambda _l\right)+\mu _h \mu _l \left(\mu _h+\lambda _l\right)-\mu _h \mu _l \left(\mu _h+2 \lambda _l\right)-2 \mu _h^2 \lambda _l-2 \mu _h \lambda _l^2+3 \mu _h \mu _l^2+2 \lambda _l \mu _l^2+\mu _l^3\right) \notag \\
    &+ -\mu _h^3 \lambda _l-2 \mu _h^2 \lambda _l^2-\mu _h \lambda _l^3-\mu _h^2 \mu _l \left(\mu _h+2 \lambda _l\right)-\mu _h \lambda _l \mu _l \left(\mu _h+2 \lambda _l\right)+\mu _h \mu _l^3 \,.
    %\delta_2 = &-\mu _h^3 \lambda _l-2 \mu _h^2 \lambda _l^2-\mu _h \lambda _l^3-\mu _h^2 \mu _l \left(\mu _h+2 \lambda _l\right)-\mu _h \lambda _l \mu _l \left(\mu _h+2 \lambda _l\right)+\mu _h \mu _l^3 \notag \\
    %& + \lambda _h \left(\lambda _l \mu _l \left(\mu _h+\lambda _l\right)+\mu _h \mu _l \left(\mu _h+\lambda _l\right)-\mu _h \mu _l \left(\mu _h+2 \lambda _l\right)-2 \mu _h^2 \lambda _l-2 \mu _h \lambda _l^2+3 \mu _h \mu _l^2+2 \lambda _l \mu _l^2+\mu _l^3\right) \notag \\
    %& +\lambda _h^2 \left(\mu _l \left(\mu _h+\lambda _l\right)-\mu _h \lambda _l+\mu _h \mu _l+\lambda _l \mu _l+2 \mu _l^2\right) + \lambda _h^3 \mu _l \,.
\end{align*}

\begin{figure}[h]
\centering
	\includegraphics[width=0.6\linewidth]{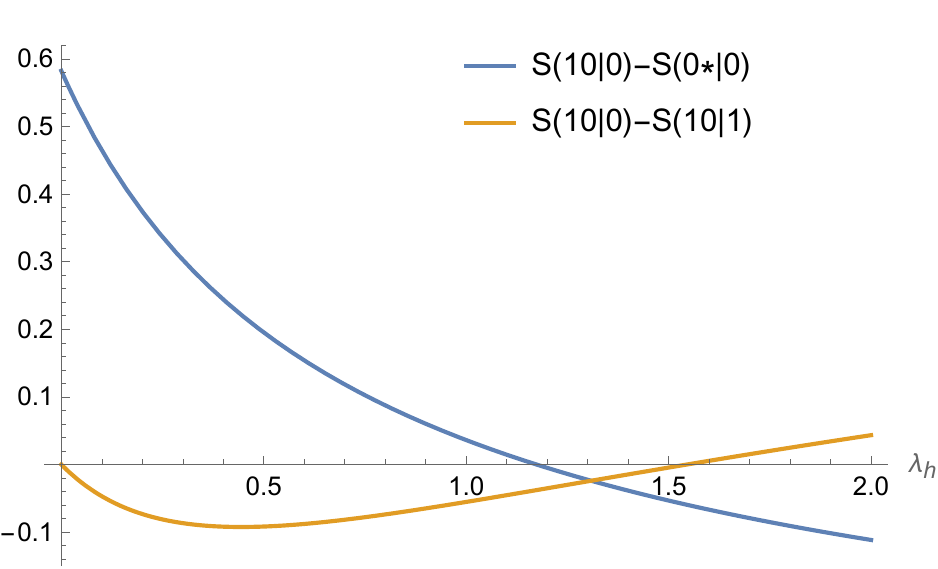}
	\caption{$\mu _h = 1.8,\,\lambda_l= 0.7$.}	\label{fig:delta2}
\end{figure}
Similarly, we treat $\delta_1$ and $\delta_2$ as the polynomial of $\lambda_h$ and arrange them in the descending order. It can be seen that for any values of $\lambda_l,\mu_h,\mu_l$, $\delta_1 = \mu _h \lambda _l \left(4 \lambda _l \mu _l+2 \lambda _l^2+\mu _l^2\right)$ when $\lambda_h = 0$. The value of $\delta_1$ either decreases in $\lambda_h$ to $-\infty$, or increases in $\lambda_h$ first and then decreases in $\lambda_h$ to $-\infty$. Thus, there exists a unique $\tilde{\lambda}_h$ such that $S(10|0)-S(0{*}|0) > 0$ when $\lambda_h < \tilde{\lambda}_h$, and $S(10|0)-S(0{*}|0) < 0$ when $\lambda_h > \tilde{\lambda}_h$. Similarly, there exists a unique $\tilde{\lambda}_h > 0$ such that $S(10|0)=S(10|1)$ when $\lambda_h = \tilde{\lambda}_h$.

Moreover, for any fixed $\lambda_l,\mu_h,\mu_l$, when $\lambda_h = \lambda_l\,\mu_h/\mu_l$,
\begin{equation*}
	\delta_1 = -(\mu_h-\mu_l)\, \mu _h \lambda _l^2 < 0\,, \qquad \delta_2 = -\left(\mu _h^2-\mu_l^2\right) \left(\lambda _l+\mu _l\right) \mu _h  < 0 \,.
\end{equation*}
This is the case only if $S(10|0)-S(0{*}|0)$ and $S(10|0)-S(10|1)$ intersect when they are negative, as depicted in Figure \ref{fig:delta2}. Thus, if we alter the value of $\lambda_h$, for any fixed $\lambda_l,\mu_h,\mu_l$,
\begin{itemize}
	\item when $S(10|0) > S(10|1)$, $S(0{*}|0) > S(10|0)$,
	\item when $S(10|0) > S(0{*}|0)$, $S({*}1|1) > S(10|0)$. 
\end{itemize}

\section{Proof of Lemma \ref{lemma:110}} \label{appendix:110}
    \underline{To prove that $11|0$ is never optimal}, we calculate
	{\footnotesize
	\begin{align*} \label{eq:5}
		&S(11|0)-S(10|0) =  \\
		& \frac{\mu _l \mu _h \lambda _l \, \delta_1}{\left(\lambda _l+\mu _l\right) \left(\lambda _h \left(\lambda _h+\mu _h+\lambda _l\right)+\mu _l \left(\lambda _h+\mu _h\right)\right) \left(2 \lambda _h^3 \left(\lambda _l+\mu _l\right)+2 \lambda _h^2 \left(\lambda _l+\mu _l\right) \left(\mu _h+\lambda _l\right)+2 \lambda _h \mu _h \lambda _l \mu _l+\mu _h^2 \lambda _l \mu _l\right)} \,, \notag
	\end{align*}
	}where
	{\small \begin{equation*}
		\delta_1 = -2 \lambda _h^3 \mu _l+\lambda _h^2 \left(2 \mu _h \lambda _l-3 \mu _h \mu _l-2 \lambda _l \mu _l-2 \mu _l^2\right)+\lambda _h \left(2 \mu _h \lambda _l^2+2 \mu _h^2 \lambda _l+3 \mu _h \lambda _l \mu _l-\mu _h \mu _l^2\right)+\mu _h^2 \lambda _l \mu _l \,.
	\end{equation*}
	}It can be seen that $\delta_1$ either increases from $\mu _h^2 \lambda _l \mu _l$ and decreases to $-\infty$, or decreases from $\mu _h^2 \lambda _l \mu _l$ to $-\infty$. In any case, there is a unique value of ${\lambda}_h > 0$ that makes $S(11|0)= S(10|0)$.
	
	Then we have \begin{small}
	\begin{equation*} \label{eq:6}
		S(11|0)-S({*}1|1) = \frac{2 \lambda _h^2 \, \delta_2}{\left(2 \lambda _h \mu _h+2 \lambda _h^2+\mu _h^2\right) \left(2 \lambda _h^3 \left(\lambda _l+\mu _l\right)+2 \lambda _h^2 \left(\lambda _l+\mu _l\right) \left(\mu _h+\lambda _l\right)+2 \lambda _h \mu _h \lambda _l \mu _l+\mu _h^2 \lambda _l \mu _l\right)} \,,
	\end{equation*}
	\end{small}
	where
	\begin{small}
	\begin{equation*}
		\delta_2 = 2 \lambda _h^3 \mu _l+\lambda _h^2 \left(-2 \mu _h \lambda _l+4 \mu _h \mu _l+2 \lambda _l \mu _l\right)+\lambda _h \left(-2 \mu _h \lambda _l^2-4 \mu _h^2 \lambda _l+2 \mu _h \lambda _l \mu _l+\mu _h^2 \mu _l\right) -2 \mu _h^3 \lambda _l-2 \mu _h^2 \lambda _l^2+\mu _h^2 \lambda _l \mu _l\,.
	\end{equation*}
	\end{small}
	\begin{figure}[h]
    \centering
	\includegraphics[width=0.6\linewidth]{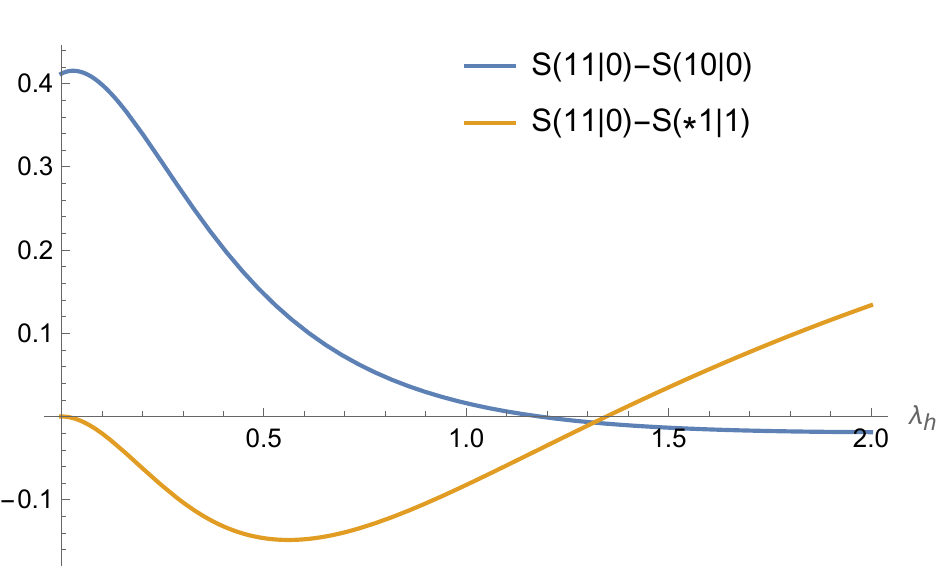}
	\caption{$\mu _h = 1.8,\,\lambda_l= 0.7$.}	\label{fig:delta3}
    \end{figure}
	
	It can be seen that $\delta_2$ either decreases from $-2 \mu _h^3 \lambda _l-2 \mu _h^2 \lambda _l^2+\mu _h^2 \lambda _l \mu _l$ and then increases to $\infty$, or increases to $\infty$. In any case, there is a unique value of $\lambda_h$ on $(0, \infty)$ that makes $\delta_2 = 0$.
	
   For any fixed $\lambda_l,\mu_h,\mu_l$, when $\lambda_h = \lambda_l\,\mu_h/\mu_l$,
	\begin{equation*}
		\delta_1 = \frac{\mu _h^2 \lambda _l^2 \left(\mu _l-\mu _h\right)}{\mu _l} < 0\,, \qquad\qquad \delta_2 = \mu _h^2 \lambda _l \left(\mu _l-\mu _h\right) < 0 \,.
	\end{equation*}
	This is the case only if $\delta_1$ and $\delta_2$ intersect when they are negative, as depicted in Figure \ref{fig:delta3}. Thus, if we alter the value of $\lambda_h$, for any fixed $\lambda_l,\mu_h,\mu_l$,
	\begin{itemize}
		\item when $S(11|0) > S({*}1|1)$, $S(10|0) > S(11|0)$,
		\item when $S(11|0) > S(10|0)$, $S({*}1|1) > S(11|0)$. 
        \end{itemize}

\section{Proof of Theorem \ref{thm:n2}} \label{appendix:thmoptimal}

	We first calculate
	\begin{small}
	\begin{equation*}
	    S(0{*}|0) - S(00|1) = \frac{2 \, \lambda _l^2 \,  \delta_1}{\left(2 \lambda _l \mu _l+2 \lambda _l^2+\mu _l^2\right) \left(2 \lambda _l^2 \left(\lambda _h+\mu _h\right) \left(\lambda _h+\lambda _l\right)+2 \lambda _l \mu _l \left(\mu _h \lambda _l+\lambda _h \left(\mu _h+\lambda _l\right)\right)+\lambda _h \mu _h \mu _l^2\right)} \,,
	\end{equation*}
	\end{small}
	where
	\begin{small}
	\begin{equation*}
		\delta_1 = 2 \lambda _h^2 \mu _l \left(\lambda _l+\mu _l\right)+\lambda _h \left(\mu _l^2 \left(4 \lambda _l-\mu _h\right)+2 \lambda _l \mu _l \left(\lambda _l-\mu _h\right)-2 \mu _h \lambda _l^2+2 \mu _l^3\right)-\mu _h \lambda _l \left(4 \lambda _l \mu _l+2 \lambda _l^2+\mu _l^2\right)\,.
	\end{equation*}
	\end{small}
	For any fixed value of $\lambda_l$ and $\mu_l$, there is a unique positive value
	\begin{align}
		\tilde{\lambda}_h \equiv  f(\mu_h) =   \frac{\mu _h \, A + B + \sqrt{\mu^2_h \, C + \mu_h \, D + E}}{4 \mu _l \left(\lambda _l+\mu _l\right)} \notag \,,
	\end{align}
	where
	\begin{align*}
	    & A = 2 \lambda _l \mu _l+2 \lambda _l^2+\mu _l^2  \\
	    & B = -4 \lambda _l \mu _l^2-2 \lambda _l^2 \mu _l-2 \mu _l^3 \\
	    & C = 8 \lambda _l^3 \mu _l+8 \lambda _l^2 \mu _l^2+4 \lambda _l \mu _l^3+4 \lambda _l^4+\mu _l^4 \\
	    & D = -16 \lambda _l \mu _l^4-28 \lambda _l^2 \mu _l^3-24 \lambda _l^3 \mu _l^2-8 \lambda _l^4 \mu _l+8 \lambda _l \mu _l \left(\lambda _l+\mu _l\right) \left(4 \lambda _l \mu _l+2 \lambda _l^2+\mu _l^2\right)-4 \mu _l^5 \\
	    & E = 16 \lambda _l \mu _l^5+24 \lambda _l^2 \mu _l^4+16 \lambda _l^3 \mu _l^3+4 \lambda _l^4 \mu _l^2+4 \mu _l^6 \,.
	\end{align*}
    Also, $\delta_1 = -\mu _h \lambda _l \left(4 \lambda _l \mu _l+2 \lambda _l^2+\mu _l^2\right) < 0$ when $\lambda_h = 0$, and the parameter in front of $\lambda_h^2$ is $2 \mu _l \left(\lambda _l+\mu _l\right) > 0$. Thus $0{*}|{0}$ is better than $00|1$ when $\lambda_h < f(\mu_h)$, and $00|1$ is better than $0{*}|{0}$ when $\lambda_h > f(\mu_h)$.
	As $\mu_h \rightarrow \infty$,
	\[
	\lim_{\mu_h \rightarrow \infty}\frac{f(\mu_h)}{\mu_h}
	\approx \frac{A + \sqrt{C}}{4 \mu _l \left(\lambda _l+\mu _l\right)} \,.
	\]
	That is, when $\lambda_l$ and $\mu_l$ are fixed, and $\mu_h$ is very large, the switching curve that separates the regions where $0{*}|0$ is better and $00|1$ is better, is approximately linear.
	
	Next, we calculate
	{\footnotesize \begin{align*}
		S({*}1|1) - S(00|1) = \frac{2 \mu _h \delta_2}{\left(2 \lambda _h \mu _h+2 \lambda _h^2+\mu _h^2\right) \left(2 \lambda _l^2 \left(\lambda _h+\mu _h\right) \left(\lambda _h+\lambda _l\right)+2 \lambda _l \mu _l \left(\mu _h \lambda _l+\lambda _h \left(\mu _h+\lambda _l\right)\right)+\lambda _h \mu _h \mu _l^2\right)} \,,
	\end{align*}
	}where
	\begin{align}
		\delta_2 = -\lambda _h^3 \left(2 \lambda _l \mu _l+2 \mu _l^2\right)+\lambda _h^2 \left(2 \mu _h \lambda _l^2-\mu _h \mu _l^2-2 \lambda _l^2 \mu _l\right)+\lambda _h \left(2 \mu _h \lambda _l^3+\mu _h^2 \lambda _l^2+\mu _h^2 \lambda _l \mu _l\right)+\mu _h^2 \lambda _l^3\,. \notag
	\end{align}
	We have
	\begin{align}
		\frac{\partial \delta_2}{\partial \lambda_h} = -\lambda _h^2 \left(6 \lambda _l \mu _l+6 \mu _l^2\right) +\lambda _h \left(4 \mu _h \lambda _l^2-2 \mu _h \mu _l^2-4 \lambda _l^2 \mu _l\right)  + 2 \mu _h \lambda _l^3+\mu _h^2 \lambda _l^2+\mu _h^2 \lambda _l \mu _l \,. \notag
	\end{align}
	It is apparent that $\frac{\partial \delta_2}{\partial \lambda_h}$ is positive first and then becomes negative. Thus, $\delta_2$ is increasing in $\lambda_h$ and then decreasing to $-\infty$. So there is a unique value $\tilde{\lambda}_h \equiv g(\mu_h)$ such that $S(00|1) \leq S({*}1|1)$ when $0 < \lambda_h \leq \tilde{\lambda}_h$, and $S(00|1) > S({*}1|1)$ when $\lambda_h > \tilde{\lambda}_h$. The expression of $g(\mu_h)$ is too complicated, we omit it here, but provide the limiting slope.
	
	Similarly, when $\mu_h$ is large enough,
	{\scriptsize
	\begin{align*}
	    & \lim_{\mu_h \rightarrow \infty} \frac{g(\mu_h)}{\mu_h} = \frac{1}{12 \mu _l \left(\lambda _l+\mu _l\right)} \left( 4 \lambda _l^2-2 \mu _l^2 \right. \\
		& + (1+i \sqrt{3}) \sqrt[3]{-18 \lambda _l^5 \mu _l-24 \lambda _l^4 \mu _l^2-9 \lambda _l^3 \mu _l^3+12 \lambda _l^2 \mu _l^4+9 \lambda _l \mu _l^5-i \, 3 \sqrt{3} \sqrt{\lambda _l^2 \mu _l^2 \left(\lambda _l+\mu _l\right){}^4 \left(8 \lambda _l^3 \mu _l+12 \lambda _l^2 \mu _l^2+8 \lambda _l \mu _l^3+4 \lambda _l^4+\mu _l^4\right)}-8 \lambda _l^6+\mu _l^6}  \\
		& \left. + (1-i \sqrt{3}) \sqrt[3]{-18 \lambda _l^5 \mu _l-24 \lambda _l^4 \mu _l^2-9 \lambda _l^3 \mu _l^3+12 \lambda _l^2 \mu _l^4+9 \lambda _l \mu _l^5+i \, 3 \sqrt{3} \sqrt{\lambda _l^2 \mu _l^2 \left(\lambda _l+\mu _l\right){}^4 \left(8 \lambda _l^3 \mu _l+12 \lambda _l^2 \mu _l^2+8 \lambda _l \mu _l^3+4 \lambda _l^4+\mu _l^4\right)}-8 \lambda _l^6+\mu _l^6}  \right) \,.
	\end{align*}}That is, when $\lambda_l$ and $\mu_l$ are fixed, and $\mu_h$ is very large, the switching curve the separates the regions where $00|1$ is better and $*1|1$ is better, is approximately linear.
	
	\begin{figure}[h]
    \centering
	\includegraphics[width=0.6\linewidth]{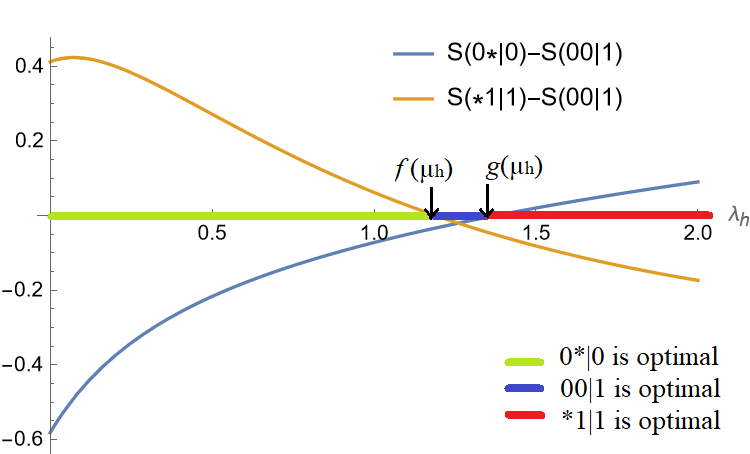}
	\caption{$\mu _h = 1.8,\,\lambda_l= 0.7$.}	\label{fig:delta4}
    \end{figure}

    From the above analysis, we know that for $\lambda_h >0$, $S(0{*}|0) - S(00|1) > 0$ if and only if $\lambda_h<f(\mu_h)$, and $S(*1|1) - S(00|1) < 0$ if and only if $\lambda_h<g(\mu_h)$. To see that $f(\mu_h) < g(\mu_h)$, we first check
    {\footnotesize\begin{align*}
        & S(0{*}|0) - S(00|1) \mid_{\lambda_h = \frac{\lambda_l}{\mu_l}\,\mu_h} = \frac{2 \lambda _l^2 \mu _l^3 \left(\mu _l-\mu _h\right)}{\left(2 \lambda _l \mu _l+2 \lambda _l^2+\mu _l^2\right) \left(2 \lambda _l^3 \left(\mu _h+\mu _l\right)+2 \lambda _l^2 \mu _l \left(\mu _h+2 \mu _l\right)+2 \lambda _l \mu _l^2 \left(\mu _h+\mu _l\right)+\mu _h \mu _l^3\right)} < 0 \\
        & S(*1|1) - S(00|1) \mid_{\lambda_h = \frac{\lambda_l}{\mu_l}\,\mu_h} =\frac{2 \lambda _l^2 \mu _l^3 \left(\mu _l-\mu _h\right)}{\left(2 \lambda _l \mu _l+2 \lambda _l^2+\mu _l^2\right) \left(2 \lambda _l^3 \left(\mu _h+\mu _l\right)+2 \lambda _l^2 \mu _l \left(\mu _h+2 \mu _l\right)+2 \lambda _l \mu _l^2 \left(\mu _h+\mu _l\right)+\mu _h \mu _l^3\right)} < 0 \,.
    \end{align*}
    }Thus, $S(0{*}|0) - S(00|1)$ and $S(*1|1) - S(00|1)$ intersect when both of them are negative, as depicted in Figure \ref{fig:delta4}, so $\lambda_h<g(\mu_h)$, and we prove the results. The optimal strategy along $\lambda_h$ axis is also depicted in Figure \ref{fig:delta4}. Also note that when $\lambda_h/\mu_h = \lambda_l/\mu_l$, $00|1$ is optimal.

\section{Calculation of the system efficiency} \label{appendix:QS}

There are four types of transitions: a customer in high rate becomes inactive;  a customer in low rate becomes inactive; an inactive customer becomes active in low rate; an inactive customer becomes active in high rate. The transition rates are listed in Table \ref{tab:TRates2}.

\begin{table}[!ht]
\caption{Transition rates when every customer adopts threshold $n$.} \label{tab:TRates2}
\vspace*{10pt}
\centering
        {\def\arraystretch{1.5} 
\begin{tabular*}{0.5\textwidth}{@{\extracolsep{\fill}}lcl}
\hline
\hline
Transition  & Rate & States\\
		\hline \hline
		$(i,h) \rightarrow (i+1, h-1)$ & $h \lambda_h$ & $h > 0$\\
		\hline
		$(i,h) \rightarrow (i+1,h)$ & $(N-i-h) \lambda_l$ & $i+h < N$\\
		\hline
		$(i,h) \rightarrow (i-1,h)$ & $\mu_l$ & $i \leq n$ \\
		\hline
		$(i,h) \rightarrow (i-1,h+1)$ & $\mu_h$ & $i > n$ \\
\hline
\hline
\end{tabular*}
\label{table:mechanical_property}
}
\end{table}

By treating $i$ as the phase and $h$ as the level, we can calculate the stationary distribution of the states in $\mathcal{S}$ via matrix analytic methods \citep{N81}. First, we organize the states as
\begin{align*}
	(0,0),(1,0), \ldots, (N,0), (0,1), (1,1), \ldots, (N-1,1), \ldots, (0,N) \,.
\end{align*}
We then write down the generator matrix under threshold strategy $n$ as
\begin{equation*}
	Q_S(n) =
	\begin{bmatrix}
		Q_0^{(0)} & Q_1^{(0)} & 0 & 0  &  \\
		Q_{-1}^{(1)} & Q_0^{(1)} & Q_1^{(1)} & 0  &   \\
		0 & Q_{-1}^{(2)} & Q_0^{(2)} & Q_1^{(2)} &  \\
		&  & \ddots & \ddots  &  \\
		&  &  &  Q_{-1}^{(N)}  & Q_0^{(N)}
	\end{bmatrix} \,,
\end{equation*}
where the defining matrices $Q_{-1}^{(h)}, Q_{0}^{(h)}, Q_{1}^{(h)}$ are
\begin{align*}
	& Q^{(h)} _{-1}  =
	\begin{bmatrix}
		0  & h \lambda_h &  &    \\
		& 0  & h \lambda_h  &  & \\
		&  & \ddots & \ddots   &    \\
		&  &   & 0& h \lambda_h  \\
	\end{bmatrix} \qquad\qquad
    Q_1^{(h)} =
	\begin{bmatrix}
		0  &  &  &    \\
		\lambda_h & 0  &    &  && \\
		&  \lambda_h & 0 &    &  &  \\
		&   & \ddots & \ddots   & &   \\
		&  &   & 0& \ddots &  \\
		&  &   &  &\ddots & 0 \\
		&  &   &  & & 0
	\end{bmatrix}  \\
	& Q^{(h)} _{-1} \in \mathbb{R}^{(N-h+1) \times(N-h+2)}, \, Q_1^{(h)}  \in  \mathbb{R}^{(N-h+1) \times(N-h)}\,,  \\
%	\end{align*}
%	\begin{align*}
	& Q_0^{(h)} =
	\begin{bmatrix}
		b_0^{(h)} & (N-h) \lambda_l & 0 &  & &  &  \\
		0 & b_1^{(h)}  & (N-h-1) \lambda_l & &  &  &  \\
		& \ddots & \ddots  &  \ddots &  &   & \\
		&   & \lambda_l &  b_n^{(h)}  &  (N-h-n) \lambda_l  & & \\
		&  &   & \ddots & \ddots&   \\
		&  & & & \lambda_l & b_c^{(h)} &
	\end{bmatrix}\in\mathbb{R}^{(N-h+1) \times(N-h+1)} \,,
\end{align*}
$b_j^{(k)}$ is the number that makes the row sum of $Q$ equal to $0$.

The stationary distribution vector
\[
\bm{\pi}^{(n)} \equiv \left[\pi^{(n)}_{0,0},\pi^{(n)}_{1,0}, \ldots, \pi^{(n)}_{N,0}, \pi^{(n)}_{0,1}, \pi^{(n)}_{1,1}, \ldots, \pi^{(n)}_{N-1,1}, \ldots, \pi^{(n)}_{0,N}\right] \,,
\]
can be obtained by solving
\[
\bm{\pi}^{(n)} Q_S(n) = 0 \qquad  \bm{\pi}^{(n)}\bm{e} = 1\,,
\]
where $\bm{e}$ is a column vector of ones. With $\bm{\pi}^{(n)}$, the efficiency, when everyone is using threshold $n$, can be calculated using formula \eqref{eq:SW} or \eqref{eq:SW2}.

\section{Calculation of the individual efficiency} \label{appendix:QN}

Note that when $h = 0$, there are no active customers in high rate, thus $k$ cannot take value $0$. When $i+h = N$, there are no active customers at a low rate, so $k$ cannot take value $-1$. In the analysis of the best-response threshold strategy, we assume that the tagged customer uses threshold $m \in \{0,1,\ldots, N\}$, and other customers use threshold $x \in [0,N]$. To calculate the stationary distribution of $\pi^{(m,x)}_{k,i,h}$, we first write down the state space
{\footnotesize \begin{align} \label{eq:state}
	\mathcal{S}' = \{&(-1,0,0),\,(-1,1,0), \,(1,1,0 ), \, (-1,2,0),\,(1,2,0 ),\,(2,2,0 ), \ldots, (1,N,0), \ldots, (N,N,0) \notag \\
	& \ldots \notag \\
	& (-1,0,h),\, (0,0,h), \, (-1,1,h), \,(0,1,h), \,(1,1,h), \ldots, (0,N-h,h),\ldots, (N-h,N-h,h) \notag \\
	& \ldots \notag \\
	& (0,0,N) \} \,.
	\end{align}
	}

    When the event is an active customer at a high rate becomes inactive, $i$ increases by $1$, and $h$ decreases by $1$. If $k=-1$ or $k\geq 1$, then the one who becomes inactive must be from customers other than the tagged customer. Thus with rate $h \lambda_h$, an active customer with a high rate becomes inactive. If $k = 0$, the tagged customer is active with a high rate. If it is not the tagged customer that becomes inactive, then the rate is $(h-1) \, \lambda_h$, and $k$ stays at $0$. If it is the tagged customer becomes inactive, the rate is $\lambda_h$, and $k$ becomes $i+1$.

    When the event is an active customer at a low rate becomes inactive, $i$ increases by $1$, and $h$ stays the same. If $k = 0$ or $k \geq 1$, the tagged customer is active at a high rate, so the one who becomes inactive must be from customers other than the tagged customer. With rate $(N-i-h) \lambda_l$, an active customer with a low rate becomes inactive. If $k = -1$, the tagged customer is active with a low rate. If it is not the tagged customer who becomes inactive, then the rate is $(N-i-h-1) \, \lambda_l$, and $k$ stays at $-1$. If it is the tagged customer becomes inactive, the rate is $\lambda_l$, and $k$ becomes $i+1$.

    When the event is a customer finishes her service at a low rate, $i$ decreases by $1$, and $h$ stays the same. If $k = 0,-1$ or $k>1$, the tagged customer is either queueing or active, so the one who departs is not the tagged customer. The customer in service departs with rate $\mu_l$ if $i>n+1$ and rate $\mu_l \, (1-p)$ if $i=n+1$. State $k$ decreases by $1$ if $k>1$ and stays the same if $k = 0,-1$. If $k=1$, it can only be $i > m$, the tagged customer leaves with rate $\mu_l$.

    When the event is a customer finishes her service in high rate, $i$ decreases by $1$, and $h$ increases by $1$. If $k = 0,-1$ or $k>1$, the tagged customer is either queueing or active, so the one who departs is not the tagged customer. The customer in service departs with rate $\mu_h$ if $i\leq n$ and rate $\mu_h \, p$ if $i=n+1$. State $k$ decreases by $1$ if $k \geq 1$ and stays the same if $k = 0,-1$. If $k=1$, it must be $i\leq m$, and the tagged customer leaves with rate $\mu_h$.

\begin{table}[!ht]
\caption{Transition rates when the tagged customer uses $m$ and all the others use $x$.} \label{tab:TRates}
\vspace*{10pt}
\centering
        {\def\arraystretch{1.5} 
\begin{tabular*}{0.8\textwidth}{@{\extracolsep{\fill}}lcl}
\hline
\hline
	Transition  & Rate & States\\
		\hline \hline
		\Xhline{3\arrayrulewidth}
		$(k,i,h) \rightarrow (k, i+1, h-1)$ & $h \lambda_h$ & $h > 0, \, k \geq 1$ or $ k = -1$ \\
		\hline
		$(0,i,h) \rightarrow (0, i+1, h-1)$ & $(h-1) \lambda_h$ & $h>0$ \\
		\hline
		$(0,i,h) \rightarrow (i+1, i+1, h-1)$ & $\lambda_h$ & $h>0$ \\
		\Xhline{3\arrayrulewidth}
		$(k,i,h) \rightarrow (k,i+1,h)$ & $(N-i-h) \lambda_l$ & $i+h < N$, $k \geq 1$ or $k = 0$ \\
		\hline
		$(-1,i,h) \rightarrow (-1,i+1,h)$ & $(N-i-h-1) \lambda_l$ & $i+h<N$ \\
		\hline
		$(-1,i,h) \rightarrow (i+1,i+1,h)$ & $ \lambda_l$ & $i+h<N$ \\
		\Xhline{3\arrayrulewidth}
		$(k,i,h) \rightarrow (k-1,i-1,h)$ & $\mu_l \, (1-p)$ & $k > 1, \, i = n+1$ \\
		\hline
		$(k,i,h) \rightarrow (k-1,i-1,h)$ & $\mu_l$ & $k > 1, \, i > n+1$ \\
		\hline
		$(k,i,h) \rightarrow (k,i-1,h)$ & $\mu_l \, (1-p)$ & $k = 0, -1, \, i = n+1$ \\
		\hline
		$(k,i,h) \rightarrow (k,i-1,h)$ & $\mu_l$ & $k = 0, -1, \, i > n+1$ \\
		\hline
		$(1,i,h) \rightarrow (-1,i-1,h)$ & $\mu_l$ & $i > m$ \\
		\Xhline{3\arrayrulewidth}
		$(k,i,h) \rightarrow (k-1,i-1,h+1)$ & $\mu_h \, p$ & $k>1, i = n+1$ \\
		\hline
		$(k,i,h) \rightarrow (k-1,i-1,h+1)$ & $\mu_h$ & $k>1,i \leq n$ \\
		\hline
		$(k,i,h) \rightarrow (k,i-1,h+1)$ & $\mu_h \, p $ & $k = 0, -1, \, i = n+1$ \\
		\hline
		$(k,i,h) \rightarrow (k,i-1,h+1)$ & $\mu_h$ & $k = 0, -1, \, i \leq n$ \\
		\hline
		$(1,i,h) \rightarrow (0,i-1,h+1)$ & $\mu_h$ & $i \leq m$ \\
\hline
\hline
\end{tabular*}
\label{table:mechanical_property}
}
\end{table}

The transition rates described above are summarized in Table \ref{tab:TRates}. If we line up the states in the order shown in \eqref{eq:state}, the transition rate matrix can be written as
\[
Q_N(x) =
\begin{bmatrix}
B_{0}^{(0)} & B_{1}^{(0)}& \ldots & \ldots & \ldots  & \vdots \\
B_{-1}^{(1)} & B_{0}^{(1)}  & B_{1}^{(1)}  & \ldots & \ldots  & \vdots \\
\bm{0}& B_{-1}^{(2)} & B_{0}^{(2)}  & B_{1}^{(2)}  & \ldots   & \vdots \\
\vdots & \ddots & \ddots & \ddots &&\vdots \\
\vdots&\ldots&\ldots&\ldots& B_{-1}^{(N)} & B_{0}^{(N)}
\end{bmatrix}
\]
has the block partition form, where the defining matrices $B_{-1}^{(h)}$, $B_{0}^{(h)},B_{1}^{(h)}$ are
\begin{align*}
& B_{-1}^{(h)} = \begin{bmatrix}
	\bm{0} & A_{{-1,1}}^{(0,h)} & \ldots & \ldots  & \ldots & \vdots\\
	\bm{0} & \bm{0}  & A_{{-1,1}}^{(1,h)} & \ldots & \ldots & \vdots\\
	\bm{0} & \bm{0} & \bm{0} & A_{{-1,1}}^{(2,h)}  & \ldots &\vdots\\
	\vdots & \vdots & \vdots & \ddots & \ddots &\vdots \\
	\vdots&\ldots&\ldots& \ldots& \bm{0} & A_{{-1,1}}^{(N-h,h)}
	%	\vdots&\ldots&\ldots& \ldots& \ldots & \bm{0}
\end{bmatrix}  \qquad h = 1, 2, \ldots, N \,,\\
& B_{0}^{(h)} =
\begin{bmatrix}
A_{0,0}^{(0,h)} & A_{{0,1}}^{(0,h)} & \ldots & \ldots   & \vdots \\
A_{0,-1}^{(1,h)} & A_{0,0}^{(1,h)} & A_{{0,1}}^{(1,h)}  & \ldots  & \vdots \\
\bm{0} & A_{0,-1}^{(2,h)} & A_{0,0}^{(2,h)} & \ldots   & \vdots \\
\vdots & \ddots & \ddots  &&\vdots \\
\vdots&\ldots&\ldots& A_{0,-1}^{(N-h,h)} & A_{0,0}^{(N-h,h)}
\end{bmatrix} \qquad h = 0, 1, \ldots, N, \\
& B_{1}^{(h)} =
\begin{bmatrix}
\bm{0} & \ldots & \ldots & \ldots & \vdots  \\
A_{{1,-1}}^{(1,h)}  & \bm{0}  & \ldots & \ldots & \vdots  \\
\bm{0} & A_{{1,-1}}^{(2,h)}  & \bm{0} & \ldots & \vdots  \\
\vdots & \vdots & \ddots & \ddots & \vdots \\
\vdots&\ldots&\ldots& A_{{1,- 1}}^{(N-h,h)} & \bm{0} &
\end{bmatrix} \qquad h = 0, 1, \ldots, N-1 \,,
\end{align*}
where $A_{0,1}^{(i,h)}, A_{0,-1}^{(i,h)}, A_{-1,1}^{(i,h)}$ and $A_{1,-1}^{(i,h)}$ are defined as follows. Let $I_n$ denote the identity matrix with dimension $n \times n$. The matrix form for $A_{0,1}^{(i,h)}$ is
\begin{itemize}
    \item when $i = 0,1,\ldots, N-2-h$,
    \begin{equation*}
        A_{0,1}^{(i,h)} =
        \begin{bmatrix}
            (N-1-i) \, \lambda_l & & \lambda_l \\
             & (N-i)\lambda_l \, I_i
        \end{bmatrix} \,,
    \end{equation*}
    \item when $i = N-1-h$,
    \begin{equation*}
        A_{0,1}^{(N-1-h,h)} =
        \begin{bmatrix}
             & \lambda_l \\
             \lambda_l \, I_i &
        \end{bmatrix} \,.
    \end{equation*}
\end{itemize}

The matrix form for $A_{0,-1}^{(i,h)}$ is
\begin{itemize}
    \item when $h = 0$, if $i < N$,
    \begin{equation*}
        A_{0,-1}^{(i,0)} =
        \begin{bmatrix}
            \mu_l(1-p)\mathbbm{1}(i = n+1) + \mu_l\mathbbm{1}(i > n+1) & \\
            \mu_l\mathbbm{1}(i > m) & \\
             & \mu_l(1-p)\mathbbm{1}(i = n+1) + \mu_l\mathbbm{1}(i > n+1) \\
             & I_{i-1}
        \end{bmatrix} \,,
    \end{equation*}
    if $i = N$
    \begin{equation*}
        A_{0,-1}^{(N,0)} =
        \begin{bmatrix}
            \mu_l\mathbbm{1}(i > m) & \\
             & (\mu_l(1-p)\mathbbm{1}(i = n+1) + \mu_l\mathbbm{1}(i > n+1)) \,  I_{i-1}
        \end{bmatrix} \,,
    \end{equation*}
    \item when $h > 0$, if $i < N-h$,
    {\footnotesize\begin{equation*}
        A_{0,-1}^{(i,h)} =
        \begin{bmatrix}
            (\mu_l(1-p)\mathbbm{1}(i = n+1) + \mu_l\mathbbm{1}(i > n+1)) \, I_2 & \\
            \mu_l\mathbbm{1}(i > m) \qquad 0 &  \\
             & (\mu_l(1-p)\mathbbm{1}(i = n+1) + \mu_l\mathbbm{1}(i > n+1)) \, I_{i-1}
        \end{bmatrix} \,,
    \end{equation*}}
    if $i = N-h$,
    {\footnotesize \begin{equation*}
        A_{0,-1}^{(N-h,h)} =
        \begin{bmatrix}
             & \mu_l(1-p)\mathbbm{1}(i = n+1) + \mu_l\mathbbm{1}(i > n+1) & \\
            \mu_l\mathbbm{1}(i > m)  & & \\
             & & (\mu_l(1-p)\mathbbm{1}(i = n+1) + \mu_l\mathbbm{1}(i > n+1)) \, I_{i-1}
        \end{bmatrix} \,.
    \end{equation*}}
\end{itemize}
The matrix form for $A_{-1,1}^{(i,h)}$ is
\begin{itemize}
    \item when $h = 1$, if $i < N-1$,
    \begin{equation*}
    A_{-1,1}^{(i,1)} =
    \begin{bmatrix}
        \lambda_h & & \\
        & & \lambda_h \\
        & \lambda_h \, I_i &
    \end{bmatrix} \,,
    \end{equation*}
    if $i = N-1$,
    \begin{equation*}
    A_{-1,1}^{(N-1,1)} =
    \begin{bmatrix}
         & \lambda_h \\
        \lambda_h \, I_{N-1} &
    \end{bmatrix} \,,
    \end{equation*}
    \item when $h>1$, if $i < N-h$,
    \begin{equation*}
    A_{-1,1}^{(i,h)} =
    \begin{bmatrix}
        h \lambda_h & & & \\
        & (h-1) \lambda_h & & \lambda_h\\
        & & \lambda_h \, I_i  &
    \end{bmatrix} \,,
    \end{equation*}
    if $i = N-h$,
    \begin{equation*}
    A_{-1,1}^{(N-h,h)} =
    \begin{bmatrix}
        (h-1) \lambda_h &  & \lambda_h \\
        & h \lambda_h \, I_i  &
    \end{bmatrix} \,.
    \end{equation*}
\end{itemize}
The matrix form for $A_{1,-1}^{(i,h)}$ is
\begin{itemize}
    \item when $h = 0$, if $i < N$,
    \begin{equation*}
        A_{1,-1}^{(i,0)} =
        \begin{bmatrix}
            \mu_h \, p \, \mathbbm{1}(i = n+1) + \mu_h \, \mathbbm{1}(i \leq n)  & & \\
            & \mu_h \, \mathbbm{1}(i \leq m) & \\
            & & (\mu_h \, p \, \mathbbm{1}(i = n+1) + \mu_h \, \mathbbm{1}(i \leq n)) I_{i-1}
        \end{bmatrix}
    \end{equation*}
    if $i = N$,
    \begin{equation*}
        A_{1,-1}^{(N,0)} =
        \begin{bmatrix}
            \mu_h \, \mathbbm{1}(i \leq m) & \\
            & (\mu_h \, p \, \mathbbm{1}(i = n+1) + \mu_h \, \mathbbm{1}(i \leq n)) I_{N-1}
        \end{bmatrix} \,,
    \end{equation*}
    when $h > 0$, if $i < N-h$,
    \begin{equation*}
        A_{1,-1}^{(i,h)} =
        \begin{bmatrix}
            (\mu_h \, p \, \mathbbm{1}(i = n+1) + \mu_h \, \mathbbm{1}(i \leq n)) \, I_2  & \\
            0 \qquad\qquad \mu_h \, p \, \mathbbm{1}(i \leq m) & \\
            & (\mu_h \, p \, \mathbbm{1}(i = n+1) + \mu_h \, \mathbbm{1}(i \leq n)) I_{i-1}
        \end{bmatrix} \,,
    \end{equation*}
    if $i = N-h$,
    \begin{equation*}
        A_{1,-1}^{(N-h,h)} =
        \begin{bmatrix}
            \mu_h \, p \, \mathbbm{1}(i = n+1) + \mu_h \,\mathbbm{1}(i \leq n) & \\
            \mu_h \, \mathbbm{1}(i \leq m) & \\
            & (\mu_h \, p \, \mathbbm{1}(i = n+1) + \mu_h \, \mathbbm{1}(i \leq n)) I_{i-1}
        \end{bmatrix} \,.
    \end{equation*}
\end{itemize}

Then the stationary distribution vector
\[
\bm{\pi}^{(m,x)} \equiv \left[\pi^{(m,x)}_{-1,0,0},\pi^{(m,x)}_{-1,1,0}, \ldots, \pi^{(m,x)}_{0,0,N}\right] \,,
\]
can be obtained by solving
\[
\bm{\pi}^{(m,x)} Q_N(x) = 0 \qquad  \bm{\pi}^{(m,x)}\bm{e} = 1\,.
\]

\end{document}